\newtheorem{theorem}{Theorem}[section]
\newtheorem{lemma}[theorem]{Lemma}
\newtheorem{proposition}[theorem]{Proposition}
\theoremstyle{definition}
\newtheorem{definition}[theorem]{Definition}
\newtheorem{example}[theorem]{Example}
\newtheorem{fact}{Fact}
\journal{Journal of Mathematical Analysis and Applications}
\begin{document}

\begin{frontmatter}

%% Title, authors and addresses

%% use the tnoteref command within \title for footnotes;
%% use the tnotetext command for theassociated footnote;
%% use the fnref command within \author or \address for footnotes;
%% use the fntext command for theassociated footnote;
%% use the corref command within \author for corresponding author footnotes;
%% use the cortext command for theassociated footnote;
%% use the ead command for the email address,
%% and the form \ead[url] for the home page:
%% \title{Title\tnoteref{label1}}
%% \tnotetext[label1]{}
%% \author{Name\corref{cor1}\fnref{label2}}
%% \ead{email address}
%% \ead[url]{home page}
%% \fntext[label2]{}
%% \cortext[cor1]{}
%% \address{Address\fnref{label3}}
%% \fntext[label3]{}

\title{On G\^ateaux differentiability of strongly cone paraconvex vector-valued mappings}

%% use optional labels to link authors explicitly to addresses:
%% \author[label1,label2]{}
%% \address[label1]{}
%% \address[label2]{}

\author{E. M. Bednarczuk and K. W. Le\'sniewski}

\address{Ewa M. Bednarczuk\\
System Research Institute\\ 
Polish Academy of Sciences\\
01-447 Warszawa, Poland\\
E-mail: Ewa.Bednarczuk@ibspan.waw.pl\\
Warsaw University of Technology\\ 
The Faculty of Mathematics and Information Science\\
00-662 Warszawa, Poland\\

Krzysztof W. Le\'sniewski\\
Warsaw University of Technology\\ 
The Faculty of Mathematics and Information Science\\
00-662 Warszawa, Poland\\
E-mail: k.lesniewski@mini.pw.edu.pl}

\begin{abstract}
%% Text of abstract
We prove that a strongly cone paraconvex mapping defined on a normed space $X$ and taking values in a reflexive separable Banach space $Y$ is G\^ateaux differentiable on a dense $G_{\delta}$ subset of $X$. Our results are generalizations of 
Rolewicz's theorems (Theorem 3.1) from \cite{Rolewicz2011}.
\end{abstract}

\begin{keyword}
directional derivative \sep G\^ateaux differentiability \sep Fr\'echet differentiability \sep normal cones \sep strongly paraconvex mappings \sep cone convex mappings \sep vector-valued mappings \sep separable Banach spaces
%% keywords here, in the form: keyword \sep keyword

%% PACS codes here, in the form: \PACS code \sep code

%% MSC codes here, in the form: \MSC code \sep code
%% or \MSC[2008] code \sep code (2000 is the default)

\end{keyword}

\end{frontmatter}

%% \linenumbers

%% main text

\section{Introduction}

Let $\alpha:[0,+\infty)\rightarrow[0,+\infty)$ be a nondecreasing function satisfying the condition
\begin{equation}
\label{eq_alpha}
	\lim_{t\rightarrow 0^{+}}\frac{\alpha(t)}{t}=0.
\end{equation}
	Let $X$ be a normed space and $\Omega\subset X$ be an open  convex  set. A  continuous real-valued function $f:X\rightarrow\mathbb{R}$ is {\em strongly $\alpha(\cdot)$-paraconvex} on $\Omega$ with constant $C>0$ (\cite{Olech2005}), if
	$$
	f(\lambda x_{1}+(1-\lambda)x_{2})\le\lambda f(x_{1})+(1-\lambda) f(x_{2})+C\min\{\lambda,1-\lambda\}\alpha(\|x_{1}-x_{2}\|)
	$$
	for all $x_{1},x_{2}\in\Omega$, $\lambda\in[0,1]$. We say that a continuous real-valued function $f:X\rightarrow\mathbb{R}$ is {\em strongly paraconvex} on $\Omega$, if there is a function $\alpha(\cdot)$ satisfying \eqref{eq_alpha} such that $f$ is strongly $\alpha(\cdot)$-paraconvex on $\Omega$. For $\alpha\equiv0$ we get convex functions.
For $\alpha(\cdot)=\|\cdot\|^2$ we get  {\em semiconvex} functions (see \cite{cannarsa_semiconcave_2004} for application to the the analysis of the Hamilton-Jacobi equation). Characterizations and properties of semiconvex functions were studied in \cite{Tabor2012}, \cite{VanNgai2008}, \cite{Zajicek2018}. We say that $f: X\rightarrow \mathbb{R}\cup \{+\infty\}$ is a {\em approximately convex} at $x_0\in X$, if for every $\varepsilon >0$ there exists $\delta>0$ such that for all $x,y\in X$, $\|x-x_0\|<\delta$, $\|y-x_0\|<\delta$ and $\lambda \in [0,1]$
\begin{equation}
\label{eq:approx}
f(\lambda x+(1-\lambda)y)\le\lambda f(x)+(1-\lambda)f(y)+\varepsilon \lambda (1-\lambda)\|x-y\|.
\end{equation}
In \cite{Daniilidis2004b} it was proved that locally Lipschitz function are approximately convex if and only if Clarke subdifferential is a submonotone operator. We say that $f: X\rightarrow \mathbb{R}\cup \{+\infty\}$ is {\em uniformly approximate convex}, if for arbitrary $\varepsilon>0$ there is $\delta>0$ such that for $x,y $ such that $\|x-y\|<\delta$ and \eqref{eq:approx} holds. The connection between {\em uniformly approximate convex functions} (see e.g.  \cite{Zajicek2007}) and strongly paraconvex functions was established in Theorem 4 of \cite{Mig2005a}. 

 G\^ateaux and Fr\'echet differentiability properties for different classes of functions containing convex functions were studied e.g. in \cite{Rolewicz2006b} for strongly paraconvex functions, in  \cite{Tiser2015} for quasiconvex functions.  In \cite{Rolewicz2006b} Rolewicz proved the following theorem.
  \begin{theorem}[\cite{Rolewicz2006b}, Theorem 4]
    \label{th_rol_2006}
  Let $\Omega$ be an open convex subset of a separable Banach space $X$. Let $f:X\rightarrow\mathbb{R}$ be a strongly $\alpha(\cdot)$-paraconvex function on $\Omega$.
  Then there is a dense $G_{\delta}$ set $A_{G}\subset\Omega$ such that $f$ is G\^ateaux differentiable at every point of $A_{G}$.
  \end{theorem}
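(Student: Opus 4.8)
The plan is to adapt Mazur's classical argument — a continuous convex function on a separable Banach space is G\^ateaux differentiable on a dense $G_{\delta}$ set — while tracking the paraconvexity defect $C\min\{\lambda,1-\lambda\}\alpha(\|x_{1}-x_{2}\|)$ and exploiting repeatedly that, by \eqref{eq_alpha}, every term of the form $\alpha(t\|\cdot\|)/t$ tends to $0$ as $t\to0^{+}$. First I would recall that a continuous strongly $\alpha(\cdot)$-paraconvex function is locally Lipschitz on $\Omega$: by \eqref{eq_alpha} such $f$ is locally uniformly approximately convex in the sense of \eqref{eq:approx} (cf.\ \cite{Mig2005a}), and a continuous function with that property is locally Lipschitz, just as in the convex case. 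Next, fix $x\in\Omega$ and $v\in X$ and apply the defining inequality to the colinear points $x_{1}=x$, $x_{2}=x+tv$ with weight $\lambda=1-s/t$, so that $\min\{\lambda,1-\lambda\}=s/t$ for $0<s\le t/2$; dividing by $s$ gives
\[
\frac{f(x+sv)-f(x)}{s}\ \le\ \frac{f(x+tv)-f(x)}{t}+\frac{C\,\alpha(t\|v\|)}{t}\qquad(0<s\le t/2).
\]
Letting $s\to0^{+}$ and then taking the infimum over $t>0$ forces the one-sided directional derivative $f'(x;v):=\lim_{t\to0^{+}}\bigl(f(x+tv)-f(x)\bigr)/t$ to exist — finite, by local Lipschitzness — and moreover $f'(x;v)=\inf_{t>0}\bigl[(f(x+tv)-f(x))/t+C\alpha(t\|v\|)/t\bigr]$, so $x\mapsto f'(x;v)$ is upper semicontinuous on $\Omega$ as an infimum of functions continuous in $x$. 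Local Lipschitzness also gives $|f'(x;v)-f'(x;w)|\le L\|v-w\|$ near $x$, so $p_{x}:=f'(x;\cdot)$ is a globally Lipschitz function on $X$.

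Second, I would check that $p_{x}$ is sublinear. Applying the defining inequality with $\lambda=\tfrac12$ to the pair $(x+tv,\,x-tv)$, whose midpoint is $x$, and to the pair $(x+2tv,\,x+2tw)$, whose midpoint is $x+t(v+w)$, then dividing by $t$ and letting $t\to0^{+}$ (where the defects $\alpha(2t\|\cdot\|)/(2t)$ vanish), one obtains for all $v,w\in X$
\[
f'(x;v)+f'(x;-v)\ \ge\ 0,\qquad f'(x;v+w)\ \le\ f'(x;v)+f'(x;w);
\]
together with positive homogeneity this makes $p_{x}$ a Lipschitz sublinear functional, exactly as for convex $f$. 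Using separability of $X$, pick a countable dense subset $\{v_{n}\}$ of the unit sphere and set
\[
A_{n,k}=\bigl\{x\in\Omega:\ f'(x;v_{n})+f'(x;-v_{n})<1/k\bigr\},\qquad A_{G}=\bigcap_{n\ge1}\bigcap_{k\ge1}A_{n,k}.
\]
Each $A_{n,k}$ is open because $x\mapsto f'(x;v_{n})+f'(x;-v_{n})$ is upper semicontinuous, and dense because, given any nonempty open $U\subset\Omega$ and $x\in U$, the restriction of $f$ to a segment $\{x+tv_{n}:|t|<\varepsilon\}\subset U$ is locally Lipschitz, hence differentiable at some $t_{0}$, whereat $f'(x+t_{0}v_{n};v_{n})+f'(x+t_{0}v_{n};-v_{n})=0<1/k$, so $x+t_{0}v_{n}\in A_{n,k}\cap U$. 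Hence $A_{G}$ is a dense $G_{\delta}$ subset of $\Omega$.

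It remains to show that $f$ is G\^ateaux differentiable at every $x\in A_{G}$. At such $x$ one has $p_{x}(v_{n})+p_{x}(-v_{n})=0$ for all $n$, and since $p_{x}$ is continuous and $\{v_{n}\}$ is dense, $p_{x}(-v)=-p_{x}(v)$ for every $v\in X$; an odd sublinear functional is linear, and $p_{x}$ is bounded, so $p_{x}\in X^{*}$. Finally, $\bigl(f(x+tv)-f(x)\bigr)/t\to p_{x}(v)$ as $t\to0^{+}$, while for $t=-s\to0^{-}$ one has $\bigl(f(x+tv)-f(x)\bigr)/t=-\bigl(f(x-sv)-f(x)\bigr)/s\to-p_{x}(-v)=p_{x}(v)$; thus the two-sided limit exists and equals $p_{x}(v)$ for every $v$, which is precisely G\^ateaux differentiability of $f$ at $x$ with derivative $p_{x}$.

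The only genuinely non-routine ingredient is the bookkeeping in the first two steps: one must arrange the weights and increments so that the paraconvexity defect enters only in the scale-invariant combination $\alpha(t\|\cdot\|)/t$ (for instance by writing $\tfrac12\cdot\tfrac{f(x+2tv)-f(x)}{t}=\tfrac{f(x+2tv)-f(x)}{2t}$, so that the denominator matches the argument of $\alpha$), after which \eqref{eq_alpha} does the work and all the classical facts about directional derivatives of convex functions — existence, upper semicontinuity in the base point, sublinearity, and $f'(x;v)+f'(x;-v)\ge0$ — remain valid. The separable-space residuality argument is then standard and presents no further difficulty.
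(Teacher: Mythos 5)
Your argument is, as far as I can check, correct; note that the paper itself offers no proof of this statement to compare against --- it is imported verbatim from Rolewicz (Theorem 4 of \cite{Rolewicz2006b}) and used as a black box in the proof of Theorem \ref{gateaux1} --- so your blind proof is in effect a reconstruction, and it follows the classical Mazur scheme in the way one would expect. The key computation is sound: applying the paraconvexity inequality to $x$ and $x+tv$ with $\lambda=1-s/t$ gives $\frac{f(x+sv)-f(x)}{s}\le\frac{f(x+tv)-f(x)}{t}+C\alpha(t\|v\|)/t$ for $0<s\le t/2$, which is the right substitute for monotonicity of difference quotients; together with \eqref{eq_alpha} this yields existence of $f'(x;v)$, the infimum formula, the inequality $f'(x;v)\le\frac{f(x+tv)-f(x)}{t}+C\alpha(t\|v\|)/t$ for all admissible $t$, and hence upper semicontinuity, and your midpoint arguments for $f'(x;v)+f'(x;-v)\ge0$ and subadditivity are the same scale-matched computations the paper uses in its Fact on sublinearity. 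Two points deserve more care than you give them. First, the upper semicontinuity of $x\mapsto f'(x;v_n)$ on $\Omega$ requires the infimum to be taken over a range of $t$ that is admissible uniformly on a neighbourhood of the point (so that one genuinely has an infimum of functions continuous on a common open set); this is routine --- restricting to small $t$ does not change the infimum because every admissible $t$ gives an upper bound for $f'(x;v)$ --- but since the theorem is stated on an open convex $\Omega$ rather than on all of $X$, the domain bookkeeping should be made explicit. Second, the local Lipschitz property you invoke at the outset is true but not free: either cite it (it is the scalar case of the local Lipschitz result quoted in the paper as Proposition \ref{prop:lipschitz-Rolewicz}, continuity supplying local boundedness, or the known Lipschitz property of approximately convex continuous functions) or reproduce the standard convex-style proof with the $\alpha$-defect tracked. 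With those details filled in, the proof stands, and the remaining steps (open-and-dense sets $A_{n,k}$ via a.e.\ differentiability of the Lipschitz restriction to a line, oddness plus sublinearity forcing linearity, and the two-sided limit) are exactly as in the convex case.
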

For Fr\'echet differentiability the following result has been proved by Rolewicz in \cite{Rolewicz2005}. 
\begin{theorem}[\cite{Rolewicz2005}, Theorem 4.6]
\label{th:frechet}
Let $\Omega$ be an open convex set in an Asplund space $X$. Let $f: X \rightarrow \mathbb{R}$ be a strongly $\alpha(\cdot)$-paraconvex function defined on $\Omega$. Then the set $A_0$ of points where $f$ is Fr\'echet differentiable is a dense $G_\delta$ set.
\end{theorem}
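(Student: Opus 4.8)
\emph{Proof proposal.} The strategy is to adapt the classical proof that a continuous convex function on an Asplund space is Fr\'echet differentiable on a dense $G_\delta$ set, carrying along the error term $C\alpha(\cdot)$ and using repeatedly that $\alpha(t)/t\to 0$. Two preliminary facts are needed. \emph{(a)} A continuous strongly $\alpha(\cdot)$-paraconvex function is locally Lipschitz on $\Omega$: bound $f$ from above on a small ball by continuity, then feed this into the paraconvexity inequality — whose error term $C\min\{\lambda,1-\lambda\}\alpha(\|x_1-x_2\|)$ is bounded — to obtain a two-sided bound on a slightly smaller ball, which yields a local Lipschitz estimate exactly as in the convex case. \emph{(b)} For every $x\in\Omega$ the ``$\alpha$-subdifferential''
$$\partial_\alpha f(x):=\{x^*\in X^*:\ f(y)\ge f(x)+\langle x^*,y-x\rangle-C\alpha(\|y-x\|)\ \text{ for all }y\text{ near }x\}$$
is nonempty, convex and $w^*$-compact. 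Nonemptiness follows from a Hahn–Banach argument: comparing parameters $s<t$ in the paraconvexity inequality shows $t\mapsto t^{-1}(f(x+th)-f(x))$ is monotone up to an $o(1)$ error, so $f'(x;h):=\lim_{t\to0^+}t^{-1}(f(x+th)-f(x))$ exists; the same inequality gives $f'(x;h)\le f(x+h)-f(x)+C\alpha(\|h\|)$, and $h\mapsto f'(x;h)$ is sublinear (subadditivity survives the limit because the error tends to $0$) and continuous (by (a)), so any $x^*$ dominated by $f'(x;\cdot)$ lies in $\partial_\alpha f(x)$. Since $f$ is locally Lipschitz, $\partial_\alpha f$ is locally bounded; from its definition and continuity of $f$ it is norm-to-$w^*$ upper semicontinuous; and it is ``approximately monotone'': $x^*\in\partial_\alpha f(x)$, $y^*\in\partial_\alpha f(y)$ force $\langle x^*-y^*,\,x-y\rangle\ge -2C\alpha(\|x-y\|)$.

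The heart of the proof is the step where the Asplund hypothesis enters: because $X$ is Asplund, bounded subsets of $X^*$ are fragmented by the norm, and consequently the (locally bounded, $w^*$-compact-convex-valued, norm-to-$w^*$ usc, approximately monotone) map $\partial_\alpha f$ is single-valued and norm-to-norm upper semicontinuous at every point of a dense $G_\delta$ subset $A_0$ of $\Omega$ — the same conclusion known for the subdifferential of a convex function, with approximate monotonicity playing the role of genuine monotonicity (cf. the submonotonicity discussed after \eqref{eq:approx}). I expect this to be the main obstacle, since it is where the structural content sits; concretely it is obtained either by quoting the Namioka–Phelps/Stegall fragmentation results, or by the standard Baire argument: write $A_0=\bigcap_n U_n$, where $U_n$ is the set of $x$ admitting a ball on which the norm-diameter of $\partial_\alpha f$ is $<1/n$; each $U_n$ is open, and fragmentability of $X^*$ is precisely what makes each $U_n$ dense.

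It remains to show $f$ is Fr\'echet differentiable at each $x\in A_0$, with $f'(x)=x^*$ where $\{x^*\}=\partial_\alpha f(x)$. The lower estimate is immediate from the definition of $\partial_\alpha f(x)$: $f(x+h)-f(x)-\langle x^*,h\rangle\ge -C\alpha(\|h\|)=o(\|h\|)$. For the upper estimate, pick $y^*\in\partial_\alpha f(x+h)$ and apply the defining inequality \emph{at the base point $x+h$} with $y=x$:
$$f(x+h)-f(x)-\langle x^*,h\rangle\ \le\ \langle y^*-x^*,\,h\rangle+C\alpha(\|h\|)\ \le\ \|y^*-x^*\|\,\|h\|+C\alpha(\|h\|).$$
Since $\partial_\alpha f$ is single-valued and norm-u.s.c. at $x$, $\|y^*-x^*\|\to0$ as $h\to0$, so the right-hand side is $o(\|h\|)$; combined with the lower estimate this gives $|f(x+h)-f(x)-\langle x^*,h\rangle|=o(\|h\|)$, i.e. Fr\'echet differentiability at $x$, and $A_0$ is the required dense $G_\delta$ set. (One may instead package the last step symmetrically via the modulus $d(x,\delta)=\sup_{0<\|h\|\le\delta}\|h\|^{-1}(f(x+h)+f(x-h)-2f(x))$, which is $\ge0$ by paraconvexity and whose vanishing as $\delta\to0^+$ — given $\partial_\alpha f(x)\ne\emptyset$ — characterizes the Fr\'echet points, the Asplund property guaranteeing $\{x:\lim_{\delta\to0^+}d(x,\delta)=0\}$ is a dense $G_\delta$.)
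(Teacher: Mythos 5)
This statement is not proved in the paper at all: it is imported verbatim as a known result (Rolewicz 2005, Theorem 4.6) and used later as a black box in the proof of Theorem \ref{th:Frechet}, so there is no internal proof to compare yours against. Judged on its own, your reconstruction follows the standard route known from the literature on approximately convex/strongly paraconvex functions (Rolewicz, Ngai--Penot, Daniilidis et al.): local Lipschitzness, a nonempty locally bounded ``$\alpha$-subdifferential'' which is approximately (sub)monotone, generic single-valuedness and norm upper semicontinuity of that map in Asplund spaces via $w^*$-dentability/fragmentability, and then the two-sided $o(\|h\|)$ estimate. The outline is sound, and the concluding Fr\'echet estimate (lower bound from the definition at $x$, upper bound by testing the inequality at the base point $x+h$ with $y=x$) is correct. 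The one place you should be aware is thin is exactly the step you flag: density of each $U_n$ is not merely ``quote fragmentability''; one must re-run the slice argument of the monotone case and check that the perturbation $-2C\alpha(\|x-y\|)$ can be absorbed into the slice parameter, which works only after dividing by $t=\|x-y\|$ and using $\alpha(t)/t\to 0$ for $t$ small --- you assert this but do not carry it out, and since the local definition of $\partial_\alpha f$ makes approximate monotonicity valid only for nearby points, the slice argument must be localized accordingly. Two minor slips: the Hahn--Banach step needs $f'(x;\cdot)$ finite and continuous (which your local Lipschitz fact supplies, so say so), and in your final parenthetical the symmetric quotient $f(x+h)+f(x-h)-2f(x)$ is not nonnegative under paraconvexity, only bounded below by $-C\alpha(2\|h\|)$, which is still $o(\|h\|)$ and suffices, but the claim ``$\ge 0$ by paraconvexity'' as written is false.
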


In the present paper we investigate G\^ateaux differentiability of strongly cone paraconvex mappings $f:X \rightarrow Y$, where $X$ is normed and $Y$ is a Banach space. In particular we provide an extension of Mazur's Theorem \cite{Mazur1933} to vector-valued mappings.

Let $X$ and $Y$ be normed spaces and $f:X\rightarrow Y$. Let $K\subset Y$ be a convex cone, i.e. $K+K\subset K, \lambda K \subset K$ for $\lambda \ge 0$ and the partial order generated by cone $K$ be defined as follows $x\le_K y \Leftrightarrow y-x\in K.$
\begin{definition}[\cite{Rolewicz2011}]
\label{def:para2}
Let $\Omega\subset X$ be an open convex set of a normed space $X$.
Let $0\neq k\in K\subset Y,$ where  $K$ is closed  and convex cone. We say that a continuous mapping  $f: X\rightarrow Y$ is {\em strongly $\alpha(\cdot)$-$k$-paraconvex} on $\Omega$, with constant $C> 0$, if for all $x,y\in \Omega$ and $0\le \lambda \le 1$ we have 
\begin{equation}
\label{para}
f(\lambda x+(1-\lambda)y)\le_{K} \lambda f(x)+(1-\lambda) f(y)+C \mbox{min}[\lambda, 1-\lambda] \alpha(\|x-y\|_{X})k.
\end{equation}
We say that $f$ defined on an open convex subset $\Omega\subset X$ and with values in $Y$ is strongly $\alpha(\cdot)$-$K$-paraconvex on $\Omega$, if it is {\em strongly $\alpha(\cdot)$-$k$-paraconvex} for all $k\in Int_{r}K,$ where $Int_{r}K$ is relative interior of $K.$ We say that $f$ is {\em strongly cone paraconvex}, if there exists a function $\alpha(\cdot)$ satisfying \eqref{eq_alpha} such that $f$ is strongly $\alpha(\cdot)$-$k$-paraconvex.
\end{definition}
By Lemma 5 of \cite{rolewicz2}, condition \eqref{para} can be equivalently rewritten as
\begin{equation}
 	\label{def:para2}
 	f(\lambda x+(1-\lambda)y)\le_{K} \lambda f(x)+(1-\lambda) f(y)+C_1 \lambda (1-\lambda) \alpha(\|x-y\|_{X})k
 \end{equation}
 for some $C_1\ge 0.$  If $\alpha\equiv 0$ we get cone-convex functions.
See \cite{Ama} for applications of vector-valued paraconvex mappings to optimality conditions in multicriteria optimization.

 We prove sufficient conditions for G\^ateaux and Fr\'echet differentiability of strongly cone paraconvex vector-valued mappings. For cone convex mappings analogous  conditions can be found in \cite{Borwein1982}. 

  In \cite{Rolewicz2011} Rolewicz proved the following theorem.
\begin{theorem}[\cite{Rolewicz2011}, Theorem 3.1]
\label{th:Rol}
Let $\Omega \subset X$ be an open convex set in Banach space $X$. Let $K$ be a convex closed pointed cone in $\mathbb{R}^n$. Let $f: X \rightarrow \mathbb{R}^n$ be a strongly $\alpha(\cdot)$-$K$-paraconvex mapping on $\Omega.$
Then the mapping $f$ is: 
{\begin{itemize}
	\item[(i)] Fr\'echet differentiable on a dense $G_\delta$ set provided $X$ is an Asplund space,
	\item[(ii)] G\^ateaux differentiable on a dense $G_\delta$ set provided $X$ is separable. 
\end{itemize}}
\end{theorem}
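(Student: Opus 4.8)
The strategy is to reduce the vector case to Rolewicz's scalar theorems recalled above by composing $f$ with linear functionals that are nonnegative on $K$. Set $K^{+}:=\{\varphi\in(\mathbb{R}^{n})^{*}:\varphi(v)\ge 0\text{ for every }v\in K\}$ and fix $k\in\mathrm{Int}_{r}K$, so that \eqref{para} holds for this $k$. Given $\varphi\in K^{+}$, the vector $\lambda f(x)+(1-\lambda)f(y)+C\min[\lambda,1-\lambda]\alpha(\|x-y\|_{X})k-f(\lambda x+(1-\lambda)y)$ belongs to $K$ by \eqref{para}, hence applying $\varphi$ yields
\[
(\varphi\circ f)(\lambda x+(1-\lambda)y)\le\lambda(\varphi\circ f)(x)+(1-\lambda)(\varphi\circ f)(y)+C\varphi(k)\min[\lambda,1-\lambda]\,\alpha(\|x-y\|_{X})
\]
for all $x,y\in\Omega$ and $\lambda\in[0,1]$. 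Since $f$ and $\varphi$ are continuous, $\varphi\circ f$ is continuous on $\Omega$, so $\varphi\circ f$ is strongly $\alpha(\cdot)$-paraconvex on $\Omega$ with constant $C\varphi(k)\ge 0$; when $\varphi(k)=0$ this just says $\varphi\circ f$ is convex, i.e.\ the instance $\alpha\equiv 0$, which is equally covered by the scalar theorems.

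Next I would use that $K$ is pointed. By the bipolar theorem $\{v\in\mathbb{R}^{n}:\varphi(v)=0\text{ for all }\varphi\in K^{+}\}=K\cap(-K)=\{0\}$, so $K^{+}$ spans $(\mathbb{R}^{n})^{*}$ and one can pick $\varphi_{1},\dots,\varphi_{n}\in K^{+}$ forming a basis of $(\mathbb{R}^{n})^{*}$; equivalently, $T:=(\varphi_{1},\dots,\varphi_{n}):\mathbb{R}^{n}\to\mathbb{R}^{n}$, $Tv=(\varphi_{1}(v),\dots,\varphi_{n}(v))$, is a linear isomorphism. By the previous paragraph each coordinate $\varphi_{j}\circ f=(T\circ f)_{j}$ is a scalar strongly $\alpha(\cdot)$-paraconvex function on $\Omega$.

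Now I would invoke the scalar results. If $X$ is separable (case (ii)), Theorem \ref{th_rol_2006} gives a dense $G_{\delta}$ set $A_{j}\subset\Omega$ on which $\varphi_{j}\circ f$ is G\^ateaux differentiable; if $X$ is Asplund (case (i)), Theorem \ref{th:frechet} gives a dense $G_{\delta}$ set $A_{j}\subset\Omega$ on which $\varphi_{j}\circ f$ is Fr\'echet differentiable. Since $\Omega$ is open in a complete (hence Baire) space, it is itself a Baire space, so $A:=\bigcap_{j=1}^{n}A_{j}$ is a dense $G_{\delta}$ subset of $\Omega$. At any $x_{0}\in A$ every coordinate of $T\circ f$ is G\^ateaux (resp.\ Fr\'echet) differentiable, and differentiability of a map with values in the finite-dimensional space $\mathbb{R}^{n}$ is equivalent to differentiability of all its coordinates; hence $T\circ f$ is G\^ateaux (resp.\ Fr\'echet) differentiable on $A$, and so is $f=T^{-1}\circ(T\circ f)$, being the composition of a differentiable map with the fixed linear isomorphism $T^{-1}$ (with derivative $T^{-1}\circ D(T\circ f)(x_{0})$). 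This produces the required dense $G_{\delta}$ sets in (i) and (ii).

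The computations left to fill in are routine: checking that applying a functional from $K^{+}$ to \eqref{para} produces exactly the scalar paraconvexity inequality with constant $C\varphi(k)$; the linear-algebra fact that a pointed closed convex cone has a spanning polar, from which a basis can be extracted; and the standard equivalences that differentiability into $\mathbb{R}^{n}$ is coordinatewise and is stable under composition with a linear isomorphism on the target. The one point that genuinely needs care — and the main obstacle — is that Rolewicz's scalar theorems must be applicable to each $\varphi_{j}\circ f$ with the \emph{same} modulus $\alpha(\cdot)$; this holds here because composition with $\varphi_{j}$ only rescales the constant $C$ and leaves $\alpha$ unchanged, and because the degenerate functionals with $\varphi_{j}(k)=0$ yield convex (hence $\alpha\equiv 0$) functions, still within the scope of Theorem \ref{th_rol_2006} and Theorem \ref{th:frechet}.
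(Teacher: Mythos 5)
Your argument is correct. Note, however, that the paper does not prove this statement at all: it is quoted verbatim from Rolewicz (Theorem 3.1 of \cite{Rolewicz2011}), and the paper's own work is the infinite-dimensional generalization (Theorem \ref{gateaux1}). Compared with that proof, your scalarization step is the same idea as Lemma \ref{lemma-paraconvex} (apply $\varphi\in K^{*}$ to \eqref{para} to get a scalar strongly $\alpha(\cdot)$-paraconvex function with constant $C\varphi(k)$, the case $\varphi(k)=0$ being harmless), but after that the routes diverge. You exploit finite dimensionality and pointedness: $K^{*}$ spans $(\mathbb{R}^{n})^{*}$, so finitely many functionals $\varphi_{1},\dots,\varphi_{n}\in K^{*}$ form a basis, Theorems \ref{th_rol_2006} and \ref{th:frechet} apply coordinatewise, the $n$ dense $G_{\delta}$ sets are intersected in the Baire space $\Omega$, and differentiability of $f=T^{-1}\circ(T\circ f)$ follows since differentiability into $\mathbb{R}^{n}$ is coordinatewise and stable under the fixed isomorphism $T^{-1}$. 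The paper cannot do this when $Y$ is infinite dimensional: there is no finite spanning family in $K^{*}$, so it instead takes a countable dense subset of $K^{*}$ (using separability of $Y^{*}$), intersects countably many $G_{\delta}$ sets, and must then work to recover the vector-valued derivative from the scalarized ones — this is where normality of $K$, reflexivity, the existence of strong directional derivatives (Theorem \ref{val}), the uniform-convergence estimate, and Proposition \ref{stwciaglosc} enter. So your proof is a genuinely simpler and self-contained argument for the finite-dimensional statement, but it does not extend to the setting the paper is actually after; the only point worth stating explicitly is the linear-algebra fact you use, namely that pointedness of the closed convex cone $K$ is exactly what makes $K^{*}$ full-dimensional so that a basis of scalarizing functionals exists.
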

We prove $(ii)$  of Theorem \ref{th:Rol} for strongly cone paraconvex vector-valued mappings with values in reflexive and separable Banach spaces $Y$ partially ordered by a normal closed convex cone $K$. In this way we provide a positive answer to Problem 4 of \cite{Rolewicz2011}:
{\it Does Theorem \ref{th:Rol} hold for infinite dimensional spaces $Y$?}
We also discuss extension of (i) to infinite-dimensional spaces by following  essentially the lines of reasoning of   \cite{Borwein1982}, where cone convex mappings were investigated with values in Banach spaces $Y$ ordered by cones with bounded bases. 

The paper is organized as follows. In Section \ref{sec:introdution} we provide basic properties of strongly $\alpha(\cdot)$-$k$-paraconvex mappings. Our main result are in Section
\ref{sec:main} and Section \ref{sec:frechet}. 
\section{Preliminary facts}
\label{sec:introdution}
Let $Y$ be a real normed space with with the topological dual $Y^*$. 
Let $\alpha(\cdot)$ be a nondecreasing function  mapping the interval $[0,+\infty)$ into itself such that
\begin{equation}
\label{eq:alpha}
\lim_{t\downarrow 0} \frac{\alpha(t)}{t}=0.
\end{equation}
Let $K \subset Y$ be a closed convex cone which defines partial ordering $x\le_K y \Leftrightarrow y-x\in K.$ Let $K^*:=\{y^*\in Y^* : y^*(k)\ge 0, \forall k\in K \}$ be the dual cone. The following representation is valid in locally convex spaces $Y$ (\cite{Jahn2014}, Lemma 3.21)
\begin{equation}
\label{eq:dual}
K=\{x\in Y\ |\ y^{*}(x)\ge 0,\ \ y^{*}\in K^{*}\}.
\end{equation}
%i.e.
%Let $K$ be closed convex pointed cone in a Banach space $Y$. Then 
%\begin{equation}
%\label{eq:stozek-wtw}
%a\in K \Leftrightarrow y^*(a)\ge 0, \forall y^*\in K^*.
%\end{equation}

%

In the sequel we will use the following lemma.
\begin{lemma}
	\label{lemma-paraconvex} 
	Let $X$ and $Y$ be  normed spaces. 
	Let $\Omega\subset X$ be an open convex set. Let $K\subset Y$ be closed convex pointed cone, $k\in K$. The following conditions are equivalent:
	\begin{enumerate}
		\item[(i)]
		mapping $f:X\rightarrow Y$  is strongly $\alpha(\cdot)$-$k$-paraconvex on $\Omega$  with constant  $C\ge 0,$
		\item[(ii)] $\mbox{for all } y^*\in K^* \mbox{ composite functions }  y^{*}\circ f:X\rightarrow\mathbb{R}
		\mbox{ are strongly }\alpha(\cdot)$-paraconvex on $\Omega$ with constant $Cy^*(k)$.
	\end{enumerate}
\end{lemma}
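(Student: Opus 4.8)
The plan is to pass between the vector inequality \eqref{para} for $f$ and the scalar inequalities that define strong $\alpha(\cdot)$-paraconvexity of the composites $y^{*}\circ f$, the bridge being the dual characterization \eqref{eq:dual} of membership in the cone $K$.

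For the implication (i) $\Rightarrow$ (ii) I would fix $y^{*}\in K^{*}$, points $x_{1},x_{2}\in\Omega$ and $\lambda\in[0,1]$, and consider the element
\[
z:=\lambda f(x_{1})+(1-\lambda)f(x_{2})+C\min\{\lambda,1-\lambda\}\alpha(\|x_{1}-x_{2}\|_{X})k-f(\lambda x_{1}+(1-\lambda)x_{2}),
\]
which lies in $K$ by hypothesis. Since $y^{*}$ is linear and nonnegative on $K$, applying it to $z$ gives $y^{*}(z)\ge 0$, and expanding by linearity this is exactly the defining inequality of strong $\alpha(\cdot)$-paraconvexity of $y^{*}\circ f$ with constant $Cy^{*}(k)$; continuity of $y^{*}\circ f$ follows at once from continuity of $f$ and of $y^{*}$. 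The degenerate case $y^{*}(k)=0$ just yields convexity of $y^{*}\circ f$, consistent with the convention $C\ge 0$.

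For (ii) $\Rightarrow$ (i) I would fix $x_{1},x_{2}\in\Omega$ and $\lambda\in[0,1]$, form the same element $z$, and check that $y^{*}(z)\ge 0$ for every $y^{*}\in K^{*}$: expanding $y^{*}(z)$ by linearity reproduces the scalar paraconvexity inequality for $y^{*}\circ f$ with constant $Cy^{*}(k)$, which holds by hypothesis (ii). Because $K$ is closed and convex and $Y$, being normed, is locally convex, the representation \eqref{eq:dual} then forces $z\in K$, which is precisely \eqref{para}; together with the continuity of $f$ that is part of the definition of these mappings this shows that $f$ is strongly $\alpha(\cdot)$-$k$-paraconvex with constant $C$.

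I do not expect a genuine analytic obstacle: the lemma is a scalarization statement and reduces to linearity of the functionals in $K^{*}$ together with the bipolar-type identity \eqref{eq:dual}. The only points needing care are bookkeeping ones — that the paraconvexity constant transforms as $C\mapsto Cy^{*}(k)$, that the $\min\{\lambda,1-\lambda\}$ form of \eqref{para} is used consistently (equivalently its $\lambda(1-\lambda)$ variant), and that the degenerate directions $y^{*}\in K^{*}$ with $y^{*}(k)=0$ are treated separately.
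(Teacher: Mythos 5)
Your proposal is correct and follows essentially the same route as the paper: the forward implication uses only nonnegativity of $y^{*}\in K^{*}$ on $K$ applied to the defect element, and the reverse implication uses the representation \eqref{eq:dual} of $K$ (valid since $Y$ is locally convex and $K$ is closed and convex) to pass from the family of scalar inequalities back to membership in $K$. Your write-up is simply a more detailed version of the paper's brief argument, with the same key tool.
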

\begin{proof}
Implication $(i)\Rightarrow (ii)$ follows directly from  \eqref{eq:dual}. 
%Indeed, assuming that for all $y^*\in K^*$ functions $y^*(f)$ are strongly $\alpha(\cdot)$-$k$-paraconvex with constant $Cy^*(k)$, i.e. 
%\begin{align*}
%&y^*(f(\lambda x_1+(1-\lambda)x_2))\\& \le \lambda y^*(f(x_1))+(1-\lambda) y^*(f(x_2))+C \mbox{min}[\lambda, 1-\lambda] \alpha(\|x_1-x_2\|)y^*(k)
%\end{align*}
%we get that $f$ is strongly $\alpha(\cdot)$-$k$-paraconvex and vice versa.
Now, let us assume that, for all $y^*\in K^*$, functions $y^*\circ f$ are strongly $\alpha(\cdot)$-$k$-paraconvex with constant $Cy^*(k)$ i.e. 
\begin{align*}
&y^*(f(\lambda x_1+(1-\lambda)x_2))\\& \le \lambda y^*(f(x_1))+(1-\lambda) y^*(f(x_2))+C \mbox{min}[\lambda, 1-\lambda] \alpha(\|x_1-x_2\|)y^*(k),
\end{align*}
then by \eqref{eq:dual},  \eqref{para} holds.
%\begin{align*}
%&y^*(f(\lambda x_1+(1-\lambda)x_2)- \lambda f(x_1)-(1-\lambda) f(x_2)\\&-C \mbox{min}[\lambda, 1-\lambda] \alpha(\|x_1-x_2\|)k)\le 0 \mbox{ for all } y^*\in K^*
%\end{align*}
\end{proof}

 The proof of Fact \ref{fact-rownowaznosc-2-paraconvex}  follows from Proposition 1.1.3, \cite{cannarsa_semiconcave_2004}, see  also Example 1 in \cite{bed-les-2017}.
\begin{fact}
\label{fact-rownowaznosc-2-paraconvex}
Let $X$ be a Hilbert space and let $Y$ be a Banach space, $K\subset Y$ closed convex cone, $k\in K$, $f:X \rightarrow Y$ and $\Omega \subset X$ be and open convex set. The following conditions are equivalent:
\begin{itemize}
\item[(i)] $f$ is strongly $2$-$k$-paraconvex ( i.e. $\alpha(t):=t^2$) on $\Omega$ with constant $C\ge0,$ 
\item[(ii)] $y^*(f(\cdot))+C \|\cdot\|^2 y^*(k)$ is convex on $\Omega$ for all $y^*\in K^*$.
\end{itemize}
\end{fact}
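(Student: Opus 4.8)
The plan is to reduce the vector-valued statement to the scalar case via Lemma \ref{lemma-paraconvex}, and then to invoke the classical equivalence for real-valued functions on a Hilbert space. By Lemma \ref{lemma-paraconvex}, applied with $\alpha(t):=t^2$, the mapping $f$ is strongly $2$-$k$-paraconvex on $\Omega$ with constant $C\ge 0$ if and only if, for every $y^*\in K^*$, the real-valued function $g_{y^*}:=y^*\circ f$ is strongly $2$-paraconvex on $\Omega$ with constant $Cy^*(k)$. Thus it suffices to show, for a fixed continuous real-valued $g$ on an open convex subset $\Omega$ of a Hilbert space $X$ and a fixed constant $D:=Cy^*(k)\ge 0$, that $g$ is strongly $2$-paraconvex with constant $D$ if and only if $g(\cdot)+D\|\cdot\|^2$ is convex on $\Omega$. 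Applying this with $g=g_{y^*}$ and $D=Cy^*(k)$ for each $y^*\in K^*$ then yields exactly the claimed equivalence $(i)\Leftrightarrow(ii)$.

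For the scalar equivalence I would use the polarization-type identity valid in a Hilbert space,
\begin{equation*}
\|\lambda x_1+(1-\lambda)x_2\|^2=\lambda\|x_1\|^2+(1-\lambda)\|x_2\|^2-\lambda(1-\lambda)\|x_1-x_2\|^2,
\end{equation*}
which holds for all $x_1,x_2\in X$ and $\lambda\in[0,1]$. Setting $h:=g+D\|\cdot\|^2$, convexity of $h$ reads $h(\lambda x_1+(1-\lambda)x_2)\le\lambda h(x_1)+(1-\lambda)h(x_2)$; substituting the identity above and cancelling the common terms $\lambda D\|x_1\|^2+(1-\lambda)D\|x_2\|^2$ transforms this inequality precisely into
\begin{equation*}
g(\lambda x_1+(1-\lambda)x_2)\le\lambda g(x_1)+(1-\lambda)g(x_2)+D\lambda(1-\lambda)\|x_1-x_2\|^2,
\end{equation*}
which is the defining inequality for strong $2$-paraconvexity of $g$ with constant $D$ in the form \eqref{def:para2}. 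Since the two inequalities are literally equivalent after this algebraic rearrangement, the equivalence follows. This is the content of Proposition 1.1.3 in \cite{cannarsa_semiconcave_2004} (stated there for semiconvex functions) and of Example 1 in \cite{bed-les-2017}, so one may simply cite those sources.

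The only points requiring a little care are, first, that the two formulations of strong paraconvexity — the one with $\min[\lambda,1-\lambda]\alpha(\|x_1-x_2\|)$ and the one with $\lambda(1-\lambda)\alpha(\|x_1-x_2\|)$ — are interchangeable up to a change of constant, which is exactly Lemma 5 of \cite{rolewicz2} recalled in \eqref{def:para2}; and second, that continuity of $f$ (hence of each $g_{y^*}$) is preserved so that the scalar notion of strong paraconvexity applies verbatim. Neither presents a genuine obstacle; the essential mechanism is the Hilbert-space parallelogram identity, which is what forces the quadratic term $\lambda(1-\lambda)\|x_1-x_2\|^2$ to match the correction term in the paraconvexity inequality. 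I expect the scalar-to-vector reduction via $K^*$ to be routine given Lemma \ref{lemma-paraconvex}, so there is no substantial difficulty beyond assembling these ingredients.
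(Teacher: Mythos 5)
Your proposal is correct and follows essentially the same route as the paper, which simply cites Proposition 1.1.3 of \cite{cannarsa_semiconcave_2004} and Example 1 of \cite{bed-les-2017}: you reduce to the scalar case via Lemma \ref{lemma-paraconvex} and then use the Hilbert-space identity $\|\lambda x_1+(1-\lambda)x_2\|^2=\lambda\|x_1\|^2+(1-\lambda)\|x_2\|^2-\lambda(1-\lambda)\|x_1-x_2\|^2$, which is exactly the content of the cited results. You also correctly flag the only delicate point, namely the passage between the $\min[\lambda,1-\lambda]$ and $\lambda(1-\lambda)$ formulations (which may alter the constant by a factor, as in \eqref{def:para2}), a detail the paper itself treats loosely.
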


By Proposition 1.1.3 of \cite{cannarsa_semiconcave_2004} condition $(ii)$ of Fact \ref{fact-rownowaznosc-2-paraconvex}  is equivalent to the following representation of $y^*\circ f$: 
\begin{equation}
\label{eq:eq3}
y^*(f(x)) = u_1(x)y^*(k) + u_2(x)y^*(k) \mbox{ for all } y^*\in K^*,
\end{equation}
where $u_1$ is convex, $u_2(x)\in C^2(\Omega)$, $\|D^2 u_2\|_\infty\le 2C$ and both functions $u_1, u_2$ depends on $y^*$. 

The following example illustrates the concept of strong $2$-$k$-paraconvexity.

\begin{example}
\label{ex:3}
Let $\Omega \subset \mathbb{R}$ be an open convex bounded set.
Let $Y=c_0$, where $c_0$ is a space of all sequences convergent to zero with sup norm. Let $k=(k_1, k_2, \dots)$ be an element of nonnegative cone in $c_0$, i.e. $K=c^+_0:=\{x=(x_i) : x_i\ge 0, i=1,2,\dots\}$. Let $f:\Omega\rightarrow Y$ be a mapping defined as  $f(x)=(f_1(x)k_1, f_2(x)k_2, \dots).$  We assume that  
$$
f_i(x) = u_1^i(x) + u_2^i(x), 
$$
where $u_1^i$ is convex, $u_2^i(x)\in C^2(\Omega)$ and $\|D^2 u^i_2\|_\infty\le C$, $i=1, 2, \dots, $  i.e. $f_i$ are strongly $2$-paraconvex scalar-valued functions, see Proposition 1.1.3 of \cite{cannarsa_semiconcave_2004}. We show that  
$f$ is strongly $2$-$k$-paraconvex. By Lemma \ref{lemma-paraconvex}, it is enough to show that for all $y^*\in K^*$ composite function $y^* \circ f$ is represented as in \eqref{eq:eq3}.
Let us take $y^*\in K^*$, i.e. $y^*=(y_1, y_2, \dots)\in \ell^1_+$, we have
\begin{align*}
(y^*\circ f)(x)=\sum\limits_{i=1}^\infty y_i k_i  f_i(x)= \underbrace{\sum\limits_{i=1}^\infty y_i k_i u^i_1(x)}_{\mbox{is convex }} + \sum\limits_{i=1}^\infty y_i k_i u_2^i(x).  
\end{align*}
Now let us prove that $\bar{u}_{2}(x):=\sum\limits_{i=1}^\infty y_i k_i u_2^i(x)$ satisfies the following conditions:  $\bar{u}_{2}(x)\in C^2(\Omega)$, $\|D^2 \bar{u}_{2}\|_\infty\le \bar{C} $ for some $\bar{C}\ge0$. Since the  second derivative of $u^i_2$ is bounded, from the Lagrange theorem, we get that the first derivative of $u^i_2$ is bounded on $\Omega$, $i=1,2,\dots$.  Furthermore, we get that $u^i_2$ is bounded on $\Omega$, since it is uniformly continuous on $\Omega$.  
  Since $k\in c_0^+$ we have $|u_2^i(x)k_iy_i|\le M_1y_i$, $i=1, 2, \dots$ for some $M_1\ge0$ and $|\frac{du_2^i(x)}{dx}k_iy_i|\le M_2$ for some $M_2\ge 0$. 
  By applying two times  the Weierstrass test we obtain
$$
\left|\frac{d^2}{dx^2}\sum\limits_{k=1}^\infty y_i k_i u_2^i(x)\right|\le M \sum\limits_{i=1}^\infty y_i=\bar{C}< \infty \ \mbox{ for some } M\ge 0.
$$
We get that $\bar{u}_2=\sum\limits_{i=1}^\infty y_i u_2^i(x)$ is twice differentiable and $\|D^2 \bar{u}_2\|_\infty\le \bar{C}$. 

%, for $\bar{u}_2=\sum\limits_{k=1}^\infty y_k u_2^k(x)$ and some $C_1\ge 0.$ 
\end{example}

We say that Banach space $Y$ is weakly sequentially complete if every weak Cauchy sequence is  weakly convergent. As an example we can take all reflexive spaces or $\ell_1$. %Space $c_0$ is not weakly sequentially complete.

In a normed space $Y$, a convex cone $K \subset Y$ is  normal, if there is $\gamma>0$ such that, if $0\le_K x \le_K y$, then $\|x\|\le \gamma \|y\|$ for all $x, y \in Y.$ 

Let $\Omega$ be an open set in normed space $X$ and let $x_0\in \Omega$. We say that direction $h\in X$ is admissible, if $x_0+th\in \Omega$ for $t$ sufficiently small. 

In \cite{bed-les-2017} we proved the following propositions.
\begin{proposition}[\cite{bed-les-2017}, Proposition 1]
\label{stwierdzenie-monotonicznosc-a-ilorazu}
	Let  $X$ be a normed space and let    $Y$ be a Banach space.  Let cone  $K\subset Y$ be closed and convex. Let  $f:X\rightarrow Y$ be strongly  $\alpha(\cdot)$-$k$-paraconvex on an open convex set  $\Omega\subset X$ with constant   $C\ge 0$ and  $k\in K\setminus\{0\}$. Then for all  $x_{0}\in \Omega$ and admissible direction $ h\in X$  such that
	$\|h\|=1$, mapping  $\phi:\mathbb{R}\rightarrow Y$ defined as
	\begin{equation}
	\label{phi0}
	\phi(t):=\frac{f(x_{0}+th)-f(x_{0}+t_{0}h)}{t-t_{0}}+
	C\frac{\alpha(t-t_{0})}{t-t_{0}}k \ \text{for}\ t_{0}<t, 
	\end{equation}
	where $t_{0}\in\mathbb{R}$	
	is  $\alpha(\cdot)$-nondecreasing, i.e.
	\begin{equation}
	\label{nier}
	\phi(t)-\phi(t_{1})+C\frac{\alpha(t_1-t_{0})}{t_{1}-t_{0}}k\in K\ \ \text{for}\ \ t_{0}<t_{1}<t.
	\end{equation}
	
	\end{proposition}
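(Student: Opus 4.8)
The plan is to scalarise the claim and reduce it to the classical one-dimensional three-slopes inequality for strongly $\alpha(\cdot)$-paraconvex functions. First I would fix $x_{0}\in\Omega$, an admissible direction $h$ with $\|h\|=1$, and reals $t_{0}<t_{1}<t$ for which $x_{0}+t_{0}h$ and $x_{0}+th$ lie in $\Omega$ (whence $x_{0}+t_{1}h\in\Omega$ as well, by convexity). Substituting the definition \eqref{phi0} of $\phi$ into the expression in \eqref{nier} and cancelling the two occurrences of $C\alpha(t_{1}-t_{0})(t_{1}-t_{0})^{-1}k$, the claim reduces to showing that
\[
w:=\frac{f(x_{0}+th)-f(x_{0}+t_{0}h)}{t-t_{0}}-\frac{f(x_{0}+t_{1}h)-f(x_{0}+t_{0}h)}{t_{1}-t_{0}}+C\,\frac{\alpha(t-t_{0})}{t-t_{0}}\,k\in K .
\]
By the dual representation \eqref{eq:dual} of $K$, this is equivalent to proving $y^{*}(w)\ge 0$ for every $y^{*}\in K^{*}$.

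Next I would fix $y^{*}\in K^{*}$. Applying $y^{*}$ to \eqref{para} and using that $y^{*}$ is nonnegative on $K$ (equivalently, invoking Lemma \ref{lemma-paraconvex}), the composite $y^{*}\circ f$ is strongly $\alpha(\cdot)$-paraconvex on $\Omega$ with constant $Cy^{*}(k)$. Since $\|h\|=1$, its restriction $g(s):=(y^{*}\circ f)(x_{0}+sh)$ to the line through $x_{0}$ in direction $h$ is then a strongly $\alpha(\cdot)$-paraconvex function of one real variable with the same constant $Cy^{*}(k)$, on the open interval $\{s:x_{0}+sh\in\Omega\}$. Writing $t_{1}=\lambda t_{0}+(1-\lambda)t$ with $1-\lambda=\frac{t_{1}-t_{0}}{t-t_{0}}\in(0,1)$, the paraconvexity inequality for $g$ at $t_{0}$ and $t$, together with the bound $\min\{\lambda,1-\lambda\}\le 1-\lambda$, gives $g(t_{1})\le\lambda g(t_{0})+(1-\lambda)g(t)+Cy^{*}(k)(1-\lambda)\alpha(t-t_{0})$. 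Subtracting $g(t_{0})$ from both sides and dividing by $t_{1}-t_{0}=(1-\lambda)(t-t_{0})>0$ leaves exactly $\frac{g(t_{1})-g(t_{0})}{t_{1}-t_{0}}\le\frac{g(t)-g(t_{0})}{t-t_{0}}+Cy^{*}(k)\frac{\alpha(t-t_{0})}{t-t_{0}}$, which is precisely $y^{*}(w)\ge 0$. Since $y^{*}\in K^{*}$ was arbitrary, $w\in K$, i.e.\ \eqref{nier} holds.

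I do not anticipate a genuine obstacle, as the argument is essentially bookkeeping. The three points that deserve a little care are: checking that the cancellation really leaves $\alpha$ evaluated only at $t-t_{0}$ on the right-hand side of $w$; noticing that the normalisation $\|h\|=1$ is exactly what keeps the paraconvexity constant unchanged when passing to the restriction on the line; and choosing to estimate the awkward term $\min\{\lambda,1-\lambda\}$ from above by $1-\lambda$ (rather than by $\lambda$), so that the factor $1-\lambda$ cancels against $t_{1}-t_{0}$ after division. Completeness of $Y$ and closedness of $K$ enter only implicitly, through the validity of the dual representation \eqref{eq:dual}.
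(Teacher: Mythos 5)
Your proof is correct, and the reduction is exactly the right one: the two $\alpha(t_1-t_0)$ terms do cancel, and the choice $1-\lambda=\frac{t_1-t_0}{t-t_0}$ together with the bound $\min\{\lambda,1-\lambda\}\le 1-\lambda$ and division by $t_1-t_0=(1-\lambda)(t-t_0)$ is the heart of the matter. Note, however, that this paper does not reproduce the proof of the proposition (it is quoted from \cite{bed-les-2017}), so the comparison can only be with the natural direct argument, and there your route differs slightly: you scalarise first, applying $y^{*}\in K^{*}$ and the dual representation \eqref{eq:dual}, whereas the same three-slope manipulation can be carried out verbatim in $Y$ with the cone order, using that $\bigl(1-\lambda-\min\{\lambda,1-\lambda\}\bigr)\,C\,\alpha(t-t_0)\,k\in K$ and that $K$ is a convex cone (so the inequality can be rearranged and divided by the positive scalar $t_1-t_0$). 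The direct version is marginally cleaner because it needs no duality at all; your scalarised version is equally valid here since $K$ is closed and convex, and it has the side benefit of making explicit the link to Lemma \ref{lemma-paraconvex}. One small correction to your closing remark: completeness of $Y$ plays no role, and \eqref{eq:dual} requires only that $Y$ be locally convex and $K$ closed and convex; also be sure to state that you use $y^{*}(k)\ge 0$ and $C\ge 0$ when replacing $\min\{\lambda,1-\lambda\}$ by $1-\lambda$, which is where positivity of $y^{*}$ on $K$ actually enters.
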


\begin{proposition}[\cite{bed-les-2017}, Proposition 2]
\label{stwierdznie-ograniczonosc-a-ilorazu}
Let  $X$ be a normed space and let    $Y$ be a Banach space.  Let cone  $K\subset Y$ be closed and convex. Let  $f:X\rightarrow Y$ be strongly  $\alpha(\cdot)$-$k$-paraconvex on a convex set  $\Omega\subset X$ with constant   $C\ge 0$ and  $k\in K\setminus\{0\}$. Then for all  $x_{0}\in \Omega$ admissible direction $ h\in X$  such that
	$\|h\|=1$, mapping  $\phi:\mathbb{R}\rightarrow Y$ defined as
	\begin{equation*}
	\label{phi1}
	\phi(t):=\frac{f(x_{0}+th)-f(x_{0})}{t}+
	C\frac{\alpha(t)}{t}k,% \ \text{dla}\ t_{0}<t, 
	\end{equation*}
	where $t>0$	
	is locally $K$-bounded from below, i.e. there are $a\in Y$ and $\delta >0$ such that
	\begin{equation}
	\label{ograniczenie1}
	\phi(t)-a\in K\ \  (i.e. \ \ \phi(t)\ge_K a)\ \ \text{for}\ \ 0<t<\delta.
	\end{equation}
	\end{proposition}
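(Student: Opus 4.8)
The strategy is to obtain the bound \eqref{ograniczenie1} from a single instance of the defining inequality \eqref{para}, applied after writing $x_0$ as a convex combination of a point lying slightly behind it and the point $x_0+th$ lying ahead of it on the segment determined by $h$. Since $\Omega$ is open, $x_0\in\Omega$ and $\|h\|=1$, I would first fix $\delta_0>0$ with $x_0+sh\in\Omega$ for every $s\in(-\delta_0,\delta_0)$, then choose once and for all $t_0:=-\delta_0/2<0$ and put $\delta:=\delta_0/2$. For every $t$ with $0<t<\delta$ one then has $x_0+t_0h\in\Omega$, $x_0+th\in\Omega$, and the identity
\[
x_0=\lambda\,(x_0+t_0h)+(1-\lambda)\,(x_0+th),\qquad \lambda:=\tfrac{t}{t-t_0}\in(0,1),\quad 1-\lambda=\tfrac{-t_0}{t-t_0},
\]
while $\|(x_0+t_0h)-(x_0+th)\|_X=t-t_0$.

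Next I would apply \eqref{para} to the points $x_0+t_0h$, $x_0+th$ and this $\lambda$. Using $\min[\lambda,1-\lambda]\le\lambda$, together with $k\in K$, $C\ge0$ and $\alpha(\cdot)\ge0$ (so that $C(\lambda-\min[\lambda,1-\lambda])\alpha(t-t_0)k\in K$), this yields
\[
f(x_0)\le_K \lambda f(x_0+t_0h)+(1-\lambda)f(x_0+th)+C\lambda\,\alpha(t-t_0)\,k.
\]
Because $K$ is a convex cone, multiplying a $\le_K$-inequality by a positive scalar and translating by a fixed vector preserve the order; isolating the term $(1-\lambda)f(x_0+th)$, multiplying by $\tfrac{1}{1-\lambda}=\tfrac{t-t_0}{-t_0}>0$, subtracting $f(x_0)$, and finally dividing by $t>0$ (and simplifying $\tfrac{t-t_0}{-t_0}-1=\tfrac{t}{-t_0}$) gives
\[
\frac{f(x_0+th)-f(x_0)}{t}\ \ge_K\ \frac{f(x_0)-f(x_0+t_0h)}{-t_0}-\frac{C\,\alpha(t-t_0)}{-t_0}\,k.
\]

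It remains to make the right-hand side independent of $t$. Since $\alpha(\cdot)$ is nondecreasing and $0<t<\delta$, we have $\alpha(t-t_0)\le\alpha(\delta-t_0)$, hence $\tfrac{C}{-t_0}\bigl(\alpha(\delta-t_0)-\alpha(t-t_0)\bigr)k\in K$, so
\[
\frac{f(x_0+th)-f(x_0)}{t}\ \ge_K\ a:=\frac{f(x_0)-f(x_0+t_0h)}{-t_0}-\frac{C\,\alpha(\delta-t_0)}{-t_0}\,k\ \in Y.
\]
Finally, since $\alpha(t)\ge0$, $t>0$ and $k\in K$, the correction term in the definition of $\phi$ satisfies $C\tfrac{\alpha(t)}{t}k\in K$, whence $\phi(t)\ge_K\frac{f(x_0+th)-f(x_0)}{t}\ge_K a$ for all $0<t<\delta$, which is exactly \eqref{ograniczenie1}. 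The only delicate point is the uniformity in $t$ of the bounding vector $a$; this is secured by keeping $t_0$ fixed and invoking monotonicity (not continuity) of $\alpha(\cdot)$, and one sees that only closedness and convexity of $K$ are actually used, pointedness playing no role in this argument. (Alternatively, one could deduce the same estimate from the $\alpha(\cdot)$-monotonicity of the difference quotient in Proposition \ref{stwierdzenie-monotonicznosc-a-ilorazu}, but the direct computation above is self-contained.)
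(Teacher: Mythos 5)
Your argument is correct and is essentially the standard one behind this proposition (which the present paper only quotes from \cite{bed-les-2017}): you write $x_0$ as a convex combination of $x_0+t_0h$, with $t_0<0$ fixed, and $x_0+th$, apply \eqref{para}, and use that $\alpha(\cdot)$ is nondecreasing and nonnegative to make the bounding vector $a$ independent of $t$, exactly as in the convex case. The only point worth noting is that you (rightly) treat $\Omega$ as open — otherwise the backward point $x_0+t_0h$ need not lie in $\Omega$ and the lower bound can genuinely fail — which is the intended hypothesis, consistent with Proposition \ref{stwierdzenie-monotonicznosc-a-ilorazu} and the paper's definition of admissible directions.
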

Basing ourselves on the above propositions we get the following result.

\begin{theorem}[Bednarczuk, Le\'sniewski \cite{bed-les-2017}, Theorem 2]
	\label{val}
	Let  $X$ be a normed space. Let $Y$ be sequentially complete Banach space. Let  $K\subset Y$ be closed convex normal cone, $k\in K$. If  $f :X \rightarrow Y$ is strongly  $\alpha(\cdot)$-$k$-paraconvex on an open convex set  $\Omega$, then $f$ has directional derivative at any $x_0\in \Omega$ in every admissible direction $h$, i.e. the strong limit exists
	$$
	f'(x_{0},h)=\lim_{t\rightarrow 0^+}\frac{f(x_{0}+th)-f(x_{0})}{t},
	$$
	where $h$ is an admissible direction at $x_0$ if $x_0+th\in \Omega$ for $t>0$ sufficiently small.
\end{theorem}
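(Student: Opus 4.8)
The plan is to reduce the vector statement to the one‑dimensional monotonicity already established in Propositions~\ref{stwierdzenie-monotonicznosc-a-ilorazu} and~\ref{stwierdznie-ograniczonosc-a-ilorazu}, to produce the candidate value $L$ first as a \emph{weak} limit of difference quotients, and then to upgrade this to norm convergence by a Mazur‑type argument built on the normality of $K$. After the routine reduction to $\|h\|=1$ (replacing $h$ by $h/\|h\|$ only rescales the quotient, and $h=0$ is trivial) I fix $x_{0}\in\Omega$, write $g(t):=\frac{f(x_{0}+th)-f(x_{0})}{t}$ and $\beta(t):=C\frac{\alpha(t)}{t}$, so that $\beta(t)\to 0$ by \eqref{eq:alpha} and $\phi(t)=g(t)+\beta(t)k$ is the mapping of the two propositions (taken with $t_{0}=0$). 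From them I record the two inequalities that drive everything: (M)~$g(s)\le_{K}g(t)+\beta(t)k$ whenever $0<s\le t$ (equivalently the $\alpha$-monotonicity of $\phi$; one may also get it directly from \eqref{para} using $\min[\lambda,1-\lambda]\le\lambda$), and (B)~$\phi(t)\ge_{K}a$ for some $a\in Y$ and all $0<t<\delta$. Sandwiching $\phi(t)$ between $a$ and a fixed upper bound coming from (M) and the boundedness of $\beta$ near $0$, and invoking normality of $K$, shows that $\{\phi(t):0<t<\delta'\}$ is norm bounded.

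Next I would construct $L$. For fixed $y^{*}\in K^{*}$ the scalar function $t\mapsto y^{*}(g(t))$ is bounded below near $0$ by (B) and satisfies $y^{*}(g(s))\le y^{*}(g(t))+\beta(t)y^{*}(k)$ for $s\le t$ by (M); a short $\liminf=\limsup$ computation then gives that $\lim_{t\to0^{+}}y^{*}(g(t))$, hence $\lim_{t\to0^{+}}y^{*}(\phi(t))$, exists. Since a closed normal cone has generating dual cone, $K^{*}-K^{*}=Y^{*}$, so this holds for every $y^{*}\in Y^{*}$; together with norm boundedness $(\phi(t_{n}))$ is weakly Cauchy for every $t_{n}\downarrow0$, and by weak sequential completeness of $Y$ there is $L\in Y$ with $\phi(t)\rightharpoonup L$ as $t\to0^{+}$, the limit being independent of the sequence because $y^{*}(L)=\lim_{t}y^{*}(\phi(t))$. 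Letting $s\to0^{+}$ in (M) and using that $K$ is weakly closed yields $L\le_{K}\phi(t)$ for all small $t$; thus $\psi(t):=\phi(t)-L$ satisfies $\psi(t)\ge_{K}0$, $\psi(t)\rightharpoonup0$, and, again by (M), $\psi(s)\le_{K}\psi(t)+\beta(s)k$ for $0<s\le t$.

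The decisive step is to show $\|\psi(t)\|\to0$; this is where I expect the real difficulty to lie, since $(\psi(t))$ is only an approximately $K$-decreasing curve that converges weakly, and a positive weakly null sequence need not be norm null. Suppose $\|\psi(t)\|\not\to0$: choose $\eta>0$ and $t_{n}\downarrow0$ with $\|\psi(t_{n})\|\ge\eta$ for all $n$. Each tail $(\psi(t_{n}))_{n\ge j}$ is still weakly null, so by Mazur's theorem (a weakly null sequence has $0$ in the norm closure of its convex hull) there are finite convex combinations $\sigma_{j}=\sum_{i\in I_{j}}\lambda_{i}^{(j)}\psi(t_{i})$ with $I_{j}\subset\{n\ge j\}$ and $\|\sigma_{j}\|<1/j$. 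Writing $q_{j}:=\max I_{j}\ (\ge j)$ and applying $\psi(s)\le_{K}\psi(t)+\beta(s)k$ with $s=t_{q_{j}}\le t_{i}$ for each $i\in I_{j}$, one obtains $0\le_{K}\psi(t_{q_{j}})\le_{K}\sigma_{j}+\beta(t_{q_{j}})k$, whence by normality $\|\psi(t_{q_{j}})\|\le\gamma\bigl(\|\sigma_{j}\|+\beta(t_{q_{j}})\|k\|\bigr)\le\gamma\bigl(1/j+\beta(t_{q_{j}})\|k\|\bigr)\to0$, because $q_{j}\to\infty$ forces $t_{q_{j}}\to0$ and hence $\beta(t_{q_{j}})\to0$. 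This contradicts $\|\psi(t_{q_{j}})\|\ge\eta$. Therefore $\psi(t)\to0$ in norm, so $g(t)=\phi(t)-\beta(t)k\to L$ in norm, i.e. $f'(x_{0},h)=L$ exists.

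In summary, the non‑routine part is the passage in the third paragraph — extracting strong convergence from weak convergence of an order‑monotone curve — accomplished by combining Mazur's theorem on the tails with the comparison of convex combinations against the ``smallest'' term $\psi(t_{q_{j}})$, made quantitative by the normality constant $\gamma$. The remaining ingredients (reduction to $\|h\|=1$, the identity $K^{*}-K^{*}=Y^{*}$ for closed normal cones, the scalar $\liminf=\limsup$ step, and the norm boundedness of the quotients) are standard and short, relying only on Propositions~\ref{stwierdzenie-monotonicznosc-a-ilorazu}--\ref{stwierdznie-ograniczonosc-a-ilorazu} and elementary properties of normal cones.
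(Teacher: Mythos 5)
Your proof is correct and follows essentially the route the paper intends for this cited result: it rests on Propositions \ref{stwierdzenie-monotonicznosc-a-ilorazu} and \ref{stwierdznie-ograniczonosc-a-ilorazu} ($\alpha$-monotonicity and lower $K$-boundedness of the difference quotient), scalarization over $K^{*}$ (via $Y^{*}=K^{*}-K^{*}$ for normal cones) to produce a weak limit using weak sequential completeness, and then normality together with a Mazur-type convex-combination argument to upgrade weak to norm convergence. The only remark is that you read the hypothesis ``sequentially complete'' as \emph{weakly} sequentially complete, which is clearly what is meant (the notion is defined immediately before the theorem, reflexive spaces and $\ell_{1}$ being the examples given, and it is exactly what your argument uses).
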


\section{G\^ateaux differentiability}
\label{sec:main}
In this section we  prove  G\^ateaux differentiability for strongly $\alpha(\cdot)$-$k$-paraconvex mappings with values in reflexive and separable Banach space $Y.$

\begin{definition}
\label{definicja-pochodna-gataux}
Mapping $f:X\rightarrow Y $ is {\bf G\^ateaux differentiable} at  $x_0$, if the strong limit
\begin{equation}
\label{gateax}
f'(x_0;h):= \lim\limits_{t\rightarrow 0} \frac{f(x_0+th)-f(x_0)}{t}
\end{equation} 
exists for every direction $h\in X$ and the mapping $h\rightarrow  f'(x_0;h)$ is linear and continuous with respect to $h.$ %{\color{red} sam G\^ateaux nie zakładał liniowości i ciągłości \cite{Kepka1971} s.161}.
\end{definition}
Even for cone-convex mappings, the existence  of directional derivatives does not imply G\^ateaux differentaibility, \cite{borwein1986}. It is easy to observe that, if the limit \eqref{gateax} exists, then directional derivative of strongly cone paraconvex vector-valued mappings is sublinear and positively homogeneous with respect to $h$. 
\begin{fact}
\label{fact:sublinearity}
Let $X$ be a normed space. Let  $Y$ be a Banach space and let $K\subset Y$ be closed convex cone, $k\in K$.
 Let  $f:X \rightarrow Y$ be strongly $\alpha(\cdot)$-$k$-paraconvex  on a convex set $\Omega$ and let  $f'(x_0;h)$ exists at  $x_0$ in every admissible direction  $h$ at $x_0$. Then $f'(x_0;\cdot)$ is sublinear and positively homogeneous. 
\end{fact}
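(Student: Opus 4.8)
The plan is to verify positive homogeneity and subadditivity of $f'(x_0;\cdot)$ separately, working directly from the defining inequality \eqref{para}.

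\emph{Positive homogeneity.} This is immediate. For $\lambda=0$ one has $f(x_0+t\cdot 0)-f(x_0)=0$, hence $f'(x_0;0)=0$. For $\lambda>0$ and $h$ admissible (so that $\lambda h$ is admissible too), the substitution $s=\lambda t$ gives
$$
f'(x_0;\lambda h)=\lim_{t\to 0^+}\frac{f(x_0+t\lambda h)-f(x_0)}{t}=\lambda\lim_{s\to 0^+}\frac{f(x_0+sh)-f(x_0)}{s}=\lambda f'(x_0;h),
$$
both limits existing by hypothesis.

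\emph{Subadditivity.} I would fix admissible directions $h_1,h_2$ for which $h_1+h_2$ is also admissible (automatic when $\Omega$ is open), note that $2h_1$ and $2h_2$ are then admissible as well, and apply \eqref{para} with $\lambda=\tfrac12$, $x:=x_0+2th_1$, $y:=x_0+2th_2$ for small $t>0$. Since $\tfrac12 x+\tfrac12 y=x_0+t(h_1+h_2)$ and $\|x-y\|_X=2t\|h_1-h_2\|_X$, subtracting $f(x_0)=\tfrac12 f(x_0)+\tfrac12 f(x_0)$ and dividing by $t>0$ yields
\begin{multline*}
\frac{f(x_0+t(h_1+h_2))-f(x_0)}{t}\le_K\frac{f(x_0+2th_1)-f(x_0)}{2t}\\
+\frac{f(x_0+2th_2)-f(x_0)}{2t}+C\|h_1-h_2\|_X\,\frac{\alpha(2t\|h_1-h_2\|_X)}{2t\|h_1-h_2\|_X}\,k
\end{multline*}
(the last term read as $0$ when $h_1=h_2$). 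Letting $t\to 0^+$, the left side tends to $f'(x_0;h_1+h_2)$, the first two terms on the right tend to $f'(x_0;h_1)$ and $f'(x_0;h_2)$ (via the substitution $s=2t$), and the last term tends to $0$ by \eqref{eq:alpha}; since $K$ is closed, the relation $\le_K$ is preserved in the limit, giving $f'(x_0;h_1+h_2)\le_K f'(x_0;h_1)+f'(x_0;h_2)$.

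There is no serious obstacle here: the argument is a routine limiting computation, and the only points deserving care are that the remainder $C\|h_1-h_2\|_X\,\alpha(2t\|h_1-h_2\|_X)/(2t\|h_1-h_2\|_X)\,k$ indeed vanishes as $t\to 0^+$, which is precisely \eqref{eq:alpha}, and that passing to the limit in $\le_K$ is legitimate, which relies on closedness of $K$. If one prefers to bypass these, a purely scalar route also works: by Lemma \ref{lemma-paraconvex} each $y^*\circ f$ with $y^*\in K^*$ is strongly $\alpha(\cdot)$-paraconvex on $\Omega$, so $(y^*\circ f)'(x_0;\cdot)=y^*\big(f'(x_0;\cdot)\big)$ is sublinear by the same midpoint argument in $\mathbb{R}$, and then the representation \eqref{eq:dual} of $K$ lifts subadditivity back to $f$.
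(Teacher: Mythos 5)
Your proof is correct and follows essentially the same route as the paper: the midpoint application of \eqref{para} with $\lambda=\tfrac12$, $x=x_0+2th_1$, $y=x_0+2th_2$, division by $t$, and passage to the limit using \eqref{eq:alpha} and the closedness of $K$. You are in fact somewhat more careful than the paper's own (terser) argument, since you also spell out positive homogeneity, the $h_1=h_2$ case, and the admissibility of $h_1+h_2$.
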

\begin{proof}
We will prove only sublinearity. 
Let $x_0\in \Omega$. Let $h_1, h_2\in X$ be  admissible directions at $x_0$ such that $h_1\neq h_2$. From \eqref{para}, for $\lambda=\frac 1 2$, $x_1= \frac 1 2 x_0 + 2th_1$, $x_2=\frac 1 2 x_0+2th_2$ and $t>0$ sufficiently small, we get 
 \begin{align*}
&\frac{f(\frac 1 2 (x_0 + 2th_1)+ \frac 1 2 (x_0 + 2th_1))}{t}\\&\le_K  \frac{f (x_0 + 2th_1)}{2t}+ \frac{f (x_0 + 2th_2)}{2t} + C\frac{\alpha(2t\|h_1-h_2\|)}{2t}k
  \end{align*}

  The fact that $\frac{\alpha(2t\|h_1-h_2\|)}{2t}\rightarrow 0$, when $t\rightarrow 0^+$ completes the proof.
 \end{proof}
 \begin{fact}
 \label{fakt_nabla}
 Let $X$ be a normed space.  Let  $Y$ be a Banach space and let $K\subset Y$ be closed convex cone, $k\in K$.
 Let  $f:X \rightarrow Y$ be strongly $\alpha(\cdot)$-$k$-paraconvex  (on $X$) and let directional derivative $f'(x_0;h)$ exists at  $x_0\in X$ in every direction  $h\in X.$ Then 
 \begin{equation}
 \label{nierownosc_pochodna_nabla}
 f'(x_0;h)\le_K \frac{f(x_0+th)-f(x_0)}{t}+C\frac{\alpha(t)}{t}k \mbox{ for all } t>0.
 \end{equation}
 \end{fact}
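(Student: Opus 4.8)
The plan is to obtain \eqref{nierownosc_pochodna_nabla} from the monotonicity of the difference quotient established in Proposition \ref{stwierdzenie-monotonicznosc-a-ilorazu}, by letting the smaller increment tend to zero and using that $K$ is closed. Since $f'(x_0;\cdot)$ is positively homogeneous (Fact \ref{fact:sublinearity}) and $C\tfrac{\alpha(t)}{t}k\in K$, the case $h=0$ is immediate; for $h\neq 0$ one may normalize $\|h\|=1$ so that Proposition \ref{stwierdzenie-monotonicznosc-a-ilorazu} applies directly, the general case following by scaling.

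Fix $t>0$ and take $t_0=0$ in Proposition \ref{stwierdzenie-monotonicznosc-a-ilorazu}, so that
\[
\phi(s)=\frac{f(x_0+sh)-f(x_0)}{s}+C\frac{\alpha(s)}{s}k,\qquad s>0 .
\]
For every $0<s<t$ the proposition gives $\phi(t)-\phi(s)+C\tfrac{\alpha(s)}{s}k\in K$, which after cancelling the $C\tfrac{\alpha(s)}{s}k$ terms reads
\[
\phi(t)-\frac{f(x_0+sh)-f(x_0)}{s}\in K .
\]
Now let $s\downarrow 0$. By hypothesis the directional derivative $f'(x_0;h)$ exists as the strong limit \eqref{gateax}, i.e. $\tfrac{f(x_0+sh)-f(x_0)}{s}\to f'(x_0;h)$ in the norm of $Y$; hence the left-hand side above converges in norm to $\phi(t)-f'(x_0;h)$. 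Because $K$ is closed, $\phi(t)-f'(x_0;h)\in K$, that is $f'(x_0;h)\le_K\phi(t)$, which is exactly \eqref{nierownosc_pochodna_nabla}.

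A self-contained variant bypasses Proposition \ref{stwierdzenie-monotonicznosc-a-ilorazu}: apply the defining inequality \eqref{para} with $x=x_0+th$, $y=x_0$ and $\lambda=s/t$ for $0<s\le t/2$, so that $\lambda x+(1-\lambda)y=x_0+sh$, $\|x-y\|=t$ and $\min\{\lambda,1-\lambda\}=s/t$; subtracting $f(x_0)$ and dividing by $s$ yields $\tfrac{f(x_0+sh)-f(x_0)}{s}\le_K\tfrac{f(x_0+th)-f(x_0)}{t}+\tfrac{C}{t}\alpha(t)k$, and letting $s\downarrow 0$ with $K$ closed gives the claim. In either approach the only delicate point is the limit passage, and it is harmless precisely because $Y$ carries a norm in which the difference quotients converge (Theorem \ref{val}) and $K$ is norm-closed; there is no real obstacle here, the statement being the vector counterpart of the elementary fact that for convex functions the directional derivative is dominated by every forward difference quotient.
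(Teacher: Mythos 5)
Your main argument is essentially the paper's own proof: the paper rewrites \eqref{nier} as \eqref{nierrr} for general $t_0$, lets $t_1\downarrow t_0$ using the closedness of $K$ together with the assumed existence of the directional derivative, and then sets $t_0=0$; you simply specialize to $t_0=0$ from the start, which changes nothing of substance. Your second, self-contained variant---applying \eqref{para} directly with $x=x_0+th$, $y=x_0$, $\lambda=s/t$, dividing by $s$ and letting $s\downarrow 0$---is also correct and bypasses Proposition \ref{stwierdzenie-monotonicznosc-a-ilorazu} altogether, so it is arguably the cleaner and more elementary route. One small remark: your reduction of a general $h$ to $\|h\|=1$ by scaling yields $\alpha(t\|h\|)$ rather than $\alpha(t)$ on the right-hand side, so it recovers \eqref{nierownosc_pochodna_nabla} literally only when $\|h\|\le 1$ (using that $\alpha$ is nondecreasing); however, the paper's statement and proof carry the same implicit normalization, so this is not a gap specific to your argument.
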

 \begin{proof}
Formula \eqref{nier} can be equivalently rewritten as
\begin{align}
\begin{aligned}
\label{nierrr}
\frac{f(x_0+th)-f(x_0+t_0h)}{t-t_0} - \frac{f(x_0+t_1h)-f(x_0+t_0h)}{t_1-t_0}\\
+ C \frac{\alpha(t-t_0)}{t-t_0}k \in K, \mbox{ for all } t_0<t_1<t.
\end{aligned}
\end{align} 
In view of the closedness of cone $K$, by letting $t_1\downarrow t_0$ in \eqref{nierrr} and keeping $t>t_0$ fixed, we get 
 $$f'(x_0+t_0h;h) \le_K  \frac{f(x_0+th)-f(x_0+t_0h)}{t-t_0} +C \frac{\alpha(t-t_0)}{t-t_0}k \mbox{ for }t>t_0.$$
 %\end{align*}

 For $t_0=0$ this gives the conclusion.
 \end{proof}
 \begin{definition}
 Let $K\subset Y$ be a closed convex cone.
We say that $f: X \rightarrow Y$ is locally vector-bounded (w.r. to cone $K$) on an open closed convex set $\Omega\subset X$, if for all $x_0\in \Omega$ there is an open set $U\ni x_0$ and $k\in K$ such that 
$$-k \le_K f(x)\le_K k \mbox{ for all } x\in U.$$
 \end{definition}
 
 \begin{proposition}[\cite{Rolewicz2012}, Proposition 3.3]
 \label{prop:lipschitz-Rolewicz}
Let $X$ be a normed space. Let a mapping $f$ defined on an open convex subset $\Omega\subset X$ with values in the Banach space $Y$ ordered by a convex pointed cone $K$ be locally strongly $\alpha(\cdot)$-$k$-paraconvex on $\Omega$ (for every $x_0\in \Omega$, $f$ is strongly $\alpha(\cdot)$-$k$-paraconvex on some open set $U\ni x_0$) and locally vector-bounded. Then $f$ is locally vector Lipschitz, i.e.
\begin{equation}
\label{eq:locally-vlipschitz}
\exists L, \exists U, \forall x, u \in U, \  -L\|u-x\|k \le_K f(u)-f(x) \le_K L\|u-x\|k   
\end{equation}
and, if, moreover, $K$ is normal, then 
\begin{equation}
\label{eq:llipschitz}
\exists L, \exists U, \forall x, u \in U, \  -L\|u-x\| \le \|f(u)-f(x)\| \le L\|u-x\|  
\end{equation}

 \end{proposition}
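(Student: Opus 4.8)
The plan is to reduce the vector-valued statement to the scalar case via Lemma \ref{lemma-paraconvex}, and then to invoke (a vector-valued version of) the classical argument that a convex function bounded above on a neighborhood is locally Lipschitz there. First I would fix $x_0\in\Omega$ and, using local strong $\alpha(\cdot)$-$k$-paraconvexity together with local vector-boundedness, choose a common open ball $U=B(x_0,2r)\subset\Omega$ on which $f$ is strongly $\alpha(\cdot)$-$k$-paraconvex with constant $C$ and on which $-\tilde k\le_K f(x)\le_K \tilde k$ for some $\tilde k\in K$. Composing with an arbitrary $y^*\in K^*$ and applying Lemma \ref{lemma-paraconvex}, the scalar function $g:=y^*\circ f$ is strongly $\alpha(\cdot)$-paraconvex on $U$ with constant $Cy^*(k)$, and $|g(x)|\le y^*(\tilde k)$ on $U$.

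The next step is the scalar Lipschitz estimate. For a strongly $\alpha(\cdot)$-paraconvex $g$ bounded by $M:=y^*(\tilde k)$ on $B(x_0,2r)$, one gets on the smaller ball $B(x_0,r)$ a Lipschitz bound of the form $|g(u)-g(x)|\le (\tfrac{2M}{r}+C'\tfrac{\alpha(2r)}{2r})\|u-x\|$, exactly as in Rolewicz's argument (or as in the Mazur/convexity proof, the $\alpha$-term contributing only a harmless additive correction because $\alpha(t)/t\to 0$). Writing $g=y^*\circ f$ and $M=y^*(\tilde k)$, the constant becomes $y^*(\tilde k)\cdot(\tfrac{2}{r})+C''y^*(k)$, i.e. it is $y^*$ evaluated on a fixed element $\ell:=\tfrac{2}{r}\tilde k+C'' k\in K$. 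Thus for all $x,u\in B(x_0,r)$ and all $y^*\in K^*$,
\begin{equation*}
-\,y^*(\ell)\|u-x\|\ \le\ y^*\bigl(f(u)-f(x)\bigr)\ \le\ y^*(\ell)\|u-x\|,
\end{equation*}
which by the dual representation \eqref{eq:dual} is precisely $-L\|u-x\|k'\le_K f(u)-f(x)\le_K L\|u-x\|k'$ after absorbing constants (here $k'=\ell$ or one rescales to a fixed $k\in K$); this is \eqref{eq:locally-vlipschitz}.

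For the second assertion, assume $K$ normal with constant $\gamma$. From $-L\|u-x\|\ell\le_K f(u)-f(x)\le_K L\|u-x\|\ell$ we have $0\le_K f(u)-f(x)+L\|u-x\|\ell\le_K 2L\|u-x\|\ell$, so normality gives $\|f(u)-f(x)+L\|u-x\|\ell\|\le 2\gamma L\|u-x\|\,\|\ell\|$, whence by the triangle inequality $\|f(u)-f(x)\|\le(2\gamma+1)L\|\ell\|\,\|u-x\|$, which is \eqref{eq:llipschitz} with a relabeled constant.

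The main obstacle is the scalar step: verifying carefully that the classical "bounded $\Rightarrow$ locally Lipschitz" estimate for convex functions survives the perturbation by $C\,\min[\lambda,1-\lambda]\,\alpha(\|x-y\|)$, and — crucially for the vector conclusion — tracking that the resulting Lipschitz constant depends on $y^*$ only through a value $y^*(\ell)$ with a single fixed $\ell\in K$ independent of $y^*$. This uniformity in $y^*$ is what lets one pass back from the family of scalar inequalities to a single order inequality in $Y$ via \eqref{eq:dual}; without it one would only get a pointwise-in-$y^*$ bound. I expect this bookkeeping, rather than any deep idea, to be where the real work lies.
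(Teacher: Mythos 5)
The paper does not prove this proposition at all -- it is quoted verbatim from Rolewicz (2012, Proposition 3.3) -- so there is no in-paper proof to match; judged on its own, your scalarization argument is essentially sound, but it takes a more roundabout route than the natural (and presumably original) one. Your plan -- compose with $y^*\in K^*$, run the classical ``bounded $\Rightarrow$ locally Lipschitz'' estimate with the harmless $\alpha$-perturbation, and observe that the Lipschitz constant is $y^*(\ell)$ for the fixed element $\ell=\tfrac{2}{r}\tilde k+\tfrac{C\alpha(3r)}{r}k$ -- does give the two-sided scalar bound uniformly in $y^*$, and your normality step is correct. However, returning from the family of scalar inequalities to the order inequality \eqref{eq:locally-vlipschitz} via \eqref{eq:dual} (and your use of Lemma \ref{lemma-paraconvex}) requires $K$ to be \emph{closed}, which the proposition as stated does not assume (only convex and pointed); within this paper $K$ is always closed, so this is a caveat rather than a fatal flaw, but it is avoidable. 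The same geometric argument can be run directly in $Y$: with $z=u+r\tfrac{u-x}{\|u-x\|}$ and $u=\lambda z+(1-\lambda)x$, inequality \eqref{para} together with $-\tilde k\le_K f\le_K \tilde k$ gives $f(u)-f(x)\le_K \tfrac{\|u-x\|}{r}\bigl(2\tilde k+C\alpha(3r)k\bigr)$ immediately, using only that $K$ is a convex cone; this direct route is shorter, needs no duality, and produces the same element $\ell\in K$ playing the role of the (loosely overloaded) $k$ in \eqref{eq:locally-vlipschitz} -- a point you rightly flag, since the Lipschitz element cannot in general be rescaled to the paraconvexity element $k$ unless the two are comparable.
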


 \begin{proposition}
 %[\cite{Borwein1982}, Proposition 2.3]
 \label{stwciaglosc}
 Let $X$ be a normed space and let $Y$ be a Banach space.
 Let  $f:X \rightarrow Y$ be strongly $\alpha(\cdot)$-$k$-paraconvex (on $X$) and locally bounded at  $x_0$, where $K\subset Y$  is closed convex and normal cone.
 If directional derivative   $f'(x_0;h)$ exists and  is linear with respect to  $h$ for all $h\in X$, then  
  $f'(x_0;\cdot)$ is continuous with respect to  $h$ on $X$.
   \end{proposition}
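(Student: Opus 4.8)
The plan is to prove that the linear map $h\mapsto f'(x_0;h)$ is bounded; combined with the hypothesis that it is linear, boundedness is exactly continuity. The first step I would take is to convert local boundedness into Lipschitz behaviour: since $f$ is strongly $\alpha(\cdot)$-$k$-paraconvex (hence locally so) and locally bounded at $x_0$, Proposition~\ref{prop:lipschitz-Rolewicz} together with normality of $K$ yields a constant $L>0$ and a neighbourhood $U$ of $x_0$ with $\|f(u)-f(v)\|\le L\|u-v\|$ for all $u,v\in U$.

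Next, fix $h\in X$ and choose $t>0$ small enough that both $x_0+th$ and $x_0-th$ lie in $U$. Applying Fact~\ref{fakt_nabla} to the direction $h$ and, separately, to the direction $-h$ gives
\[
f'(x_0;h)\le_K b_t:=\frac{f(x_0+th)-f(x_0)}{t}+C\,\frac{\alpha(t)}{t}\,k,\qquad
f'(x_0;-h)\le_K c_t:=\frac{f(x_0-th)-f(x_0)}{t}+C\,\frac{\alpha(t)}{t}\,k.
\]
By linearity, $f'(x_0;-h)=-f'(x_0;h)$, so putting $a_t:=-c_t$ we obtain the order-interval inclusion $a_t\le_K f'(x_0;h)\le_K b_t$, equivalently $0\le_K f'(x_0;h)-a_t\le_K b_t-a_t$.

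Then I would invoke normality of $K$: with $\gamma>0$ its normality constant, $\|f'(x_0;h)-a_t\|\le\gamma\|b_t-a_t\|$, hence $\|f'(x_0;h)\|\le\|a_t\|+\gamma\bigl(\|a_t\|+\|b_t\|\bigr)$. The Lipschitz estimate on $U$ gives $\|b_t\|\le L\|h\|+C\|k\|\,\alpha(t)/t$ and likewise $\|a_t\|=\|c_t\|\le L\|h\|+C\|k\|\,\alpha(t)/t$, so
\[
\|f'(x_0;h)\|\le(1+2\gamma)\left(L\|h\|+C\|k\|\,\frac{\alpha(t)}{t}\right).
\]
Letting $t\downarrow 0$ and using \eqref{eq:alpha} gives $\|f'(x_0;h)\|\le(1+2\gamma)L\|h\|$ for every $h\in X$; since the constant does not depend on $h$, the linear map $f'(x_0;\cdot)$ is bounded, hence continuous.

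The step requiring the most care is the first one: extracting an honest two-sided norm-Lipschitz estimate for $f$ near $x_0$ from the local boundedness hypothesis, which is precisely the content of Proposition~\ref{prop:lipschitz-Rolewicz} once normality of $K$ is used (recall also that a normal cone is automatically pointed, so the hypotheses of that proposition are met). Everything afterwards is a short computation; the only thing to watch is that the antipodal direction $-h$ supplies the lower bound $a_t$ with a norm controlled by $L\|h\|$, and that the final constant $(1+2\gamma)L$ stays independent of $h$, so that the pointwise bound genuinely upgrades to continuity of the whole operator $f'(x_0;\cdot)$.
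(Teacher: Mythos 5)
Your proof is correct and follows essentially the same route as the paper: local vector-boundedness plus Proposition~\ref{prop:lipschitz-Rolewicz} to get a norm-Lipschitz estimate near $x_0$, then Fact~\ref{fakt_nabla} and normality of $K$ to bound the derivative, concluding continuity from linearity. If anything, your version is slightly more careful than the paper's own display: by applying Fact~\ref{fakt_nabla} also in the direction $-h$ and using linearity you obtain the genuine order sandwich $a_t\le_K f'(x_0;h)\le_K b_t$ that normality actually requires, whereas the paper invokes normality from only the one-sided estimate \eqref{nierownosc_pochodna_nabla}.
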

   \begin{proof}
By Proposition \ref{prop:lipschitz-Rolewicz} there exist
     $L>0$ and $\delta >0$ such that
   $$
 \|f(x_0+t(h-h_0)-f(x_0)\|\le L t\|h-h_0\| \mbox{ for } 0<t<\delta, h\in X.
   $$ 
   Let us fix $\varepsilon > 0 $. For $t>0$ sufficiently small we have
   $$
   \|C\frac{\alpha(t)}{t}k\|\le \frac{\varepsilon}{2\gamma}
   ,$$ where $\gamma$ is a constant from the definition of normal cone. 
Let us take $\delta = \frac{\varepsilon}{2L\gamma}$. 
   From \eqref{nierownosc_pochodna_nabla} and from the fact that $K$ is normal 
   \begin{align*}
   \label{eq-pochodna-nier1}
   &\|f'(x_0;h-h_0)\| \le \gamma\|\frac{f(x_0+th)-f(x_0)}{t} + C\frac{\alpha(t)}{t}k\|\\&\le \gamma L \|h-h_0\| + \gamma \|C\frac{\alpha(t)}{t}k\|\le \gamma L \frac{\varepsilon}{2L\gamma} +\frac{\varepsilon}{2\gamma}\gamma=\varepsilon.   
   \end{align*}

   \end{proof}
	Now we are ready to prove the G\^ateaux differentiability of strongly cone paraconvex mappings. The following result holds.
\begin{theorem}
\label{gateaux1}
	Let $X$ be a separable Banach space. Let $Y$ be reflexive and separable Banach space. Let $K$ be closed convex and normal cone in $Y,$ $k\in K$. Let $f:X\rightarrow Y$ be  strongly $\alpha(\cdot)$-$k$-paraconvex (on $X$) and locally vector-bounded. Then  $f$  is G\^ateaux differentiable on some dense  $G_\delta$ set.
	
	\end{theorem}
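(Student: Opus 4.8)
The plan is to reduce the vector-valued statement to the scalar theorem of Rolewicz (Theorem~\ref{th_rol_2006}) applied to a countable family of composite functions, and then to recover Gâteaux differentiability of $f$ itself from the scalar differentiability of these composites together with the normality of $K$ and the directional differentiability already guaranteed by Theorem~\ref{val}. First I would fix a countable dense set $\mathcal{O}=\{\ell_i^*\}_{i\in\mathbb{N}}$ in $Y^*$ (possible since $Y$ is reflexive and separable, hence $Y^*$ is separable). By Lemma~\ref{lemma-paraconvex}, for each $\ell^*\in K^*$ the function $\ell^*\circ f:X\to\mathbb{R}$ is strongly $\alpha(\cdot)$-paraconvex, so by Theorem~\ref{th_rol_2006} there is a dense $G_\delta$ set $A_i\subset X$ on which $\ell_i^*\circ f$ is Gâteaux differentiable, whenever $\ell_i^*\in K^*$; to capture enough functionals I would in fact want to apply this to a countable set whose closed conical hull is all of $K^*$, using the Krein--Rutman-type argument (the commented-out Proposition~\ref{dual_schauder_1}) that $K^* = \operatorname{cl}\operatorname{cone}\{\ell_i^*: \ell_i^*(k')>0\ \forall k'\in K\setminus\{0\}\}$. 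Set $A_G:=\bigcap_i A_i$; by the Baire category theorem $A_G$ is a dense $G_\delta$ subset of $X$ (here separability/completeness of $X$ is used).

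Next I would show $f$ is Gâteaux differentiable at every $x_0\in A_G$. By Theorem~\ref{val}, the directional derivative $f'(x_0;h)$ exists as a strong limit in $Y$ for every $h$, and by Fact~\ref{fact:sublinearity} the map $h\mapsto f'(x_0;h)$ is sublinear and positively homogeneous. Linearity is the crux: for each $\ell_i^*$ in our family, $\ell_i^*(f'(x_0;h))=(\ell_i^*\circ f)'(x_0;h)$ (the scalar directional derivative, by continuity of $\ell_i^*$ and strong convergence of the difference quotients), and since $\ell_i^*\circ f$ is Gâteaux differentiable at $x_0$, the right-hand side is linear in $h$. Thus $\ell_i^*(f'(x_0;h_1+h_2))=\ell_i^*(f'(x_0;h_1)+f'(x_0;h_2))$ for all $i$ in the family; because $\{\ell_i^*\}$ generates a dense (in $Y^*$) cone whose span separates points of $Y$ — indeed from \eqref{eq:dual} applied to $K$ and $-K$, the functionals in $K^*$ separate points when $K$ is pointed, and a dense set of functionals always separates points of a normed space — we conclude $f'(x_0;h_1+h_2)=f'(x_0;h_1)+f'(x_0;h_2)$, and similarly additivity together with positive homogeneity gives full homogeneity. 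Hence $h\mapsto f'(x_0;h)$ is linear. Continuity of this linear map then follows from Proposition~\ref{stwciaglosc}, using local vector-boundedness and normality of $K$. Therefore $f$ is Gâteaux differentiable at $x_0$, and $A_G$ is the desired dense $G_\delta$ set.

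The step I expect to be the main obstacle is ensuring that the countable family of functionals to which the scalar Theorem~\ref{th_rol_2006} is applied is rich enough to force both the correct identification $\ell^*(f'(x_0;\cdot))=(\ell^*\circ f)'(x_0;\cdot)$ and the passage from "linear when composed with each $\ell_i^*$" to "$f'(x_0;\cdot)$ is linear." A dense set of functionals separates points, which suffices to upgrade the additivity identity tested on $\{\ell_i^*\}$ to additivity in $Y$; but one must be careful that the difference quotients converge \emph{strongly} (granted by Theorem~\ref{val}) so that the scalar derivative of $\ell_i^*\circ f$ really equals $\ell_i^*$ applied to the vector derivative — this is where the hypotheses on $Y$ enter decisively. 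A secondary technical point is verifying that $\bigcap_i A_i$ remains dense $G_\delta$, which is immediate from Baire once we know each $A_i$ is dense $G_\delta$ and the index set is countable.
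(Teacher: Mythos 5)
Your overall strategy coincides with the paper's: scalarize through a countable family of functionals in $K^*$, apply Rolewicz's scalar Theorem \ref{th_rol_2006} to each composite $\ell_i^*\circ f$ (via Lemma \ref{lemma-paraconvex}), intersect the resulting dense $G_\delta$ sets by Baire, use Theorem \ref{val} for the strong one-sided directional derivative, force linearity by testing against the functionals, and finish with Proposition \ref{stwciaglosc}. Your linearity step is in fact a mild streamlining of the paper's: you use directly that $\ell_i^*(f'(x_0;h))=(\ell_i^*\circ f)'(x_0;h)$ (legitimate, because the difference quotients converge strongly by Theorem \ref{val}) and then invoke totality of $K^*$, whereas the paper first proves a uniform-convergence lemma to get $(y^*\circ f)'(x_0;\cdot)=\lim_m(\ell_{i_m}^*\circ f)'(x_0;\cdot)$ for every $y^*\in K^*$ and then derives $f'(x_0;h)+f'(x_0;-h)=0$ by contradiction. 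Both arguments rest on the same separation fact: if $y^*(v)=0$ for all $y^*\in K^*$, then $v\in K\cap(-K)=\{0\}$ by \eqref{eq:dual} and pointedness of $K$ (you should note explicitly that pointedness follows from normality, which is the hypothesis actually given). Also state the final small step that the paper makes explicit: linearity of the one-sided derivative gives $-f'(x_0;-h)=f'(x_0;h)$, so the two-sided limit required by Definition \ref{definicja-pochodna-gataux} exists.

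The one step that does not survive scrutiny is your construction of the countable family. Starting from a countable set dense in $Y^*$ and keeping only its members lying in $K^*$ may give you nothing at all: when $K^*$ has empty norm interior (the typical situation, e.g. $K^*=\ell^2_+$ in $\ell^2$), a dense subset of $Y^*$ can avoid $K^*$ entirely, and likewise it need not meet the quasi-interior of $K^*$, so the Krein--Rutman-type patch you appeal to does not guarantee that the closed conical hull of the selected functionals is $K^*$. Relatedly, the phrase that your family "generates a dense (in $Y^*$) cone" cannot be right: a subcone of the closed cone $K^*$ that is dense in $Y^*$ would force $K^*=Y^*$, i.e. $K=\{0\}$; what you actually need (and what your parenthetical justification via \eqref{eq:dual} and pointedness really proves) is that $K^*$ is total over $Y$ and that your countable family is dense in $K^*$. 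The fix is immediate and is exactly what the paper does: $K^*$ is a subset of the separable space $Y^*$, hence separable, so take a countable dense subset $\{\ell_i^*\}\subset K^*$ directly. With that replacement your argument is complete and correct.
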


	\begin{proof}
	 We will show that the limit $f'(x_0;h)$ exists and is linear (with respect to $h$) for all $x_0\in A_0,h\in X,$ where $A_0\subset X$ is dense $G_\delta$ set.
	Continuity of directional derivative will follow from Proposition  \ref{stwciaglosc}.

	 %Z Twierdzenia \ref{val}  pochodna kierunkowa istnieje dla każdego $h\in X.$	Z Wniosku \ref{wniosek_ciaglosc_pochodnej} wiemy też, że pochodna kierunkowa jest ciągła wystarczy więc pokazać, że jest liniowa.
	From the fact that $Y$ is reflexive and separable, $Y^*$ is separable (\cite{Kreyszig2002}, Theorem 4.6-8).
	We also know that any subset of a separable Banach space is separable.  Let  $\{\ell^{*}_{i}\}_{i\in\mathbb{N}}\subset K^*$ be a dense subset. From Lemma \ref{lemma-paraconvex}  every function   $\ell^*_i \circ f$, 
$i\in \mathbb{N}$, is strongly $\alpha(\cdot)$-paraconvex. From Rolewicz Theorem  (Theorem  \ref{th_rol_2006}) every function $\ell_{i}^*\circ f$ is G\^ateaux differentiable on a dense set $A_{i}$, which is $G_\delta$, $i\in \mathbb{N}$.

Let $A_{0}:=\bigcap_{i\in \mathbb{N}} A_{i}$. From Baire's Theorem, $A_{0}$ is $G_{\delta}$.% (przeliczalne przecięcie gęstych zbiorów otwartych jest gęste \cite{Sokal2013}, Theorem 7.3).

Let us take  $y^{*}\in K^{*}.$
We have $y^{*}=\lim_{m\rightarrow\infty}\ell_{i_{m}}^*$ and
$$
y^{*}(f(x))=\lim_{m\rightarrow\infty} \ell^*_{i_{m}}(f(x))\ \ \text{ for every  } x\in X.
$$
Let us fix $x_{0}\in A_{0}$. For every direction $h\in X$ and $t>0$ we have
$$
\begin{array}{l}
y^{*}(f(x_{0}+th))=\lim\limits_{m\rightarrow\infty}\ell^{*}_{i_{m}}(f(x_{0}+th)),\\
y^{*}(f(x_{0}))=\lim\limits_{m\rightarrow\infty}\ell^{*}_{i_{m}}(f(x_{0})).
\end{array}
$$
For $t> 0$ we have
\begin{equation}
\label{eq_directional}
\frac{y^{*}(f(x_{0}+th))-y^{*}(f(x_{0}))}{t}=
\lim_{m\rightarrow\infty}\frac{\ell^{*}_{i_{m}}(f(x_{0}+th))-\ell^{*}_{i_{m}}(f(x_{0}))}{t}.
\end{equation}
From the fact that $x_{0}\in A_{0}$ the directional derivative i.e.  
$$
(\ell^{*}_{m}\circ f)'(x_{0};h)=\lim_{t{\rightarrow} 0^+}\frac{\ell^{*}_{i_{m}}(f(x_{0}+th))-\ell^{*}_{i_{m}}(f(x_{0}))}{t}
$$
exists for $m\in \mathbb{N}$. 
Let us show that sequence of functions
$$
g_{m}(t):=	\frac{\ell^*_{i_{m}}(f(x_{0}+th))-\ell^*_{i_{m}}(f(x_{0}))}{t}
$$
converges uniformly to
$$
g(t):=	\frac{y^{*}(f(x_{0}+th))-y^{*}(f(x_{0}))}{t}
$$
on  $(0,\delta)$ for some $\delta>0$. Let us fix  $\varepsilon>0$. There is  $N>0$ such that  
$$
\|\ell^{*}_{i_{m}}-y^{*}\|<\varepsilon\ \ \text{ for } m>N.
$$
For  $m>N$ we get
\begin{equation}
\label{eq_uniform}
|g_m(t)-g(t)|\le\|\ell^{*}_{i_{m}}-y^{*}\|\|\frac{f(x_{0}+th)-f(x_{0})}{t}\|.
\end{equation}
 From Theorem  \ref{val} we get the  existence of the  limit  $\lim\limits_{t\rightarrow 0^+}\frac{f(x_{0}+th)-f(x_{0})}{t}$ i.e. there is   $\delta>0$ such that for  $0<t<\delta$ we have
 $
 \|\frac{f(x_{0}+th)-f(x_{0})}{t}-f'(x_0;h)\|\le 1.
 $

In consequence, from inequality \eqref{eq_uniform},
there is   $N>0$ such that for  $m>N$ and for  $0<t<\delta$ we have
\begin{equation}
\label{eq_uniform_1}
|g_m(t)-g(t)|\le\varepsilon M \mbox{ for some } M>0.
\end{equation}
Passing to the limit in   \eqref{eq_directional} we get
\begin{equation}
\label{Gateaux}
(y^{*}\circ f)'(x_{0};h)=\lim\limits_{m\rightarrow\infty}
(\ell^{*}_{i_{m}}\circ f)'(x_{0};h).
\end{equation}
The above limit exists for every  $y^*\in K^*$.
Again, from Theorem \ref{val} we obtain the existence of 
 $ f'(x_0;h) = \lim\limits_{t\rightarrow 0^+}\frac{f(x_{0}+th)-f(x_{0})}{t}$ for every direction  $h$, $x_0\in X$, and hence we have the existence of $-f'(x_0;-h)=\lim\limits_{t\rightarrow 0^-}\frac{f(x_{0}+th)-f(x_{0})}{t}$. Now, it is enough to show that both of above limits are equal. Let us show  
\begin{equation}
\label{pochodna-kier-1}
f'(x_0;h)+ f'(x_0;-h)=0 \ \mbox{ for } x_0\in A_0.
\end{equation}
From sublinearity (Fact \ref{fact:sublinearity}) of direction derivative we have
$$
0=f'(x_0; h+ (-h)) \le  f'(x_0; h) +  f'(x_0; -h).$$
Let us assume that  $  f'(x_0; h) +  f'(x_0; -h) \in K\setminus\{0\}.$ Then there is   $y^* \in K^*$ such that 
\begin{equation}
\label{eq-nier-pochodna-1}
y^*(f'(x_0; h)) + y^*(f'(x_; -h)) >0.
\end{equation}
From the fact that in  \eqref{Gateaux} we have uniform convergence and every function $\ell^{*}_{i_{m}}\circ f$, $m\in \mathbb{N}$ is  G\^ateaux differentiable in $A_0$ we have  
\begin{align*}
&y^*(f'(x_0; h)) + y^*(f'(x_; -h))\\=&\lim_{m\rightarrow\infty}((\ell^{*}_{i_{m}}\circ f)'(x_{0};h)+(\ell^{*}_{i_{m}}\circ f)'(x_{0};-h))=0,
\end{align*}
which is a contradiction with  \eqref{eq-nier-pochodna-1}.
%Z faktu, iż istnieje granica prawostronna i lewostronna i są sobie równe otrzymujemy istnienie  $f'(x_0;h)=\lim\limits_{t\rightarrow 0}\frac{f(x_{0}+th)-f(x_{0})}{t}$ dla $x_0\in A_0.$
%Wstawiając w \eqref{pochodna-kier-1} granicę $f'(x_0;h)$ w miejsce $f'(x_0; h)$ (obie te granice istnieją i są sobie równe) otrzymujemy
%$$
%\label{liniowosc_pochodnej}
%f'(x_0; h) + f'(x_0; -h)=0 \mbox{ dla } x_0\in A_0.
%$$%\end{equation}
Directional derivative  $f'(x_0; h)$ is linear with respect to  $h\in X$ for $x_0\in A_0.$ From Proposition  \ref{stwciaglosc} we get the continuity of  $f'(x_0;h)$ with respect to   $h,$  which completes the proof.
\end{proof}	
\section{Fr\'echet differentiability}
\label{sec:frechet}
In this section we prove  Fr\'echet differentiability for strongly $\alpha(\cdot)$-$k$-paraconvex mappings with values in reflexive and separable Banach space $Y$, where $k$ is an element of  closed convex cone $K$ with bounded base.
%The following proof follows from \cite{Borwein1982} for cone convex functions.

We say that a convex set $B\subset K$ is a {\em base} for a convex cone $K\subset Y$, if 
$K=\bigcup\{\lambda b : \lambda \ge 0, b\in B\}$ and $0\notin cl B.$ If $K$ has a bounded base, then $K^*$ has norm interior. For example, the nonnegative orthant of $\ell_1$ i.e. cone $\ell_1^+:=\{x=(x_i)\in \ell_1: x_i \ge 0, i=1,2,\dots\}$ has a bounded base (\cite{casini_cones_2010}, Example 4.2). Some other examples can be found in  \cite{casini_cones_2010} and the references therein. 
The proof of the following theorem is based on the ideas of the proof of Theorem 5.2 of  \cite{Borwein1982}.
\begin{theorem}
\label{th:Frechet}
Let $X$ be an separable Asplund space and let $Y$ be reflexive and separable Banach space. Let $K\subset Y$ be a closed convex and let $K$ has a bounded base, $k\in K$. Let $f: X\rightarrow Y$ be continuous and strongly $\alpha(\cdot)$-$k$-paraconvex (on $X$). Then $f$ is Fr\'echet differentiable on a dense $G_\delta$ set. 
\end{theorem}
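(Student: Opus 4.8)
The plan is to follow the reduction used by Borwein in the proof of Theorem~5.2 of \cite{Borwein1982}: test $f$ against a single strictly positive functional taken from the interior of $K^{*}$, invoke Rolewicz's scalar Fr\'echet theorem (Theorem~\ref{th:frechet}), and then use the bounded base of $K$ to lift the scalar differentiability back to $Y$. \emph{First}, since $K$ has a bounded base $B$, strictly separating $0$ from the bounded closed convex set $\overline{B}$ produces $\ell_{0}^{*}\in Y^{*}\setminus\{0\}$ and, after rescaling, a constant $\delta>0$ with
\[
\ell_{0}^{*}(y)\ \ge\ \delta\,\|y\|\qquad\text{for every }y\in K .
\]
This single inequality shows at once that $K$ is pointed, that $K$ is normal (if $0\le_{K}x\le_{K}y$ then $\delta\|x\|\le\ell_{0}^{*}(x)\le\ell_{0}^{*}(y)\le\|\ell_{0}^{*}\|\,\|y\|$), and that $\ell_{0}^{*}\in\mathrm{int}\,K^{*}$. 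Consequently all of Section~\ref{sec:introdution} applies to $f$; as $Y$ is reflexive it is weakly sequentially complete, so Theorem~\ref{val} guarantees that $f'(x_{0};h)=\lim_{t\downarrow0}t^{-1}(f(x_{0}+th)-f(x_{0}))$ exists for every $x_{0}\in X$ and every $h\in X$.

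\emph{Next}, set $g:=\ell_{0}^{*}\circ f$. By Lemma~\ref{lemma-paraconvex} the function $g$ is continuous and strongly $\alpha(\cdot)$-paraconvex on the open convex set $X$, and since $X$ is Asplund, Theorem~\ref{th:frechet} provides a dense $G_{\delta}$ set $A_{0}\subset X$ on which $g$ is Fr\'echet differentiable; the claim is that $f$ is Fr\'echet differentiable at every $x_{0}\in A_{0}$. Fix such an $x_{0}$ and, for $0\neq h\in X$, write $t:=\|h\|$, $e:=h/t$, and
\[
w(h):=f(x_{0}+h)+f(x_{0}-h)-2f(x_{0}).
\]
Using $\lambda=\tfrac12$ and the points $x_{0}\pm h$ in \eqref{para} gives $w(h)\ge_{K}-C\alpha(2t)k$, whereas Fr\'echet differentiability of $g$ at $x_{0}$ gives $\ell_{0}^{*}(w(h))=g(x_{0}+h)+g(x_{0}-h)-2g(x_{0})=o(t)$. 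Applying $\ell_{0}^{*}$ to $w(h)+C\alpha(2t)k\in K$, and then the norm estimate above together with $\alpha(2t)/t\to0$, yields
\[
\|w(h)\|\ \le\ \|w(h)+C\alpha(2t)k\|+C\alpha(2t)\|k\|\ \le\ \tfrac1\delta\bigl(\ell_{0}^{*}(w(h))+C\alpha(2t)\,\ell_{0}^{*}(k)\bigr)+C\alpha(2t)\|k\|\ =\ o(t).
\]

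\emph{Then} I would extract linearity of $f'(x_{0};\cdot)$ and the Fr\'echet estimate from \eqref{nierownosc_pochodna_nabla}. Writing \eqref{nierownosc_pochodna_nabla} for the directions $e$ and $-e$ and adding gives $f'(x_{0};e)+f'(x_{0};-e)\le_{K}t^{-1}w(h)+2C\,t^{-1}\alpha(t)\,k$; letting $t\downarrow0$, using $\|w(h)\|/t\to0$, $\alpha(t)/t\to0$ and closedness of $K$ gives $f'(x_{0};e)+f'(x_{0};-e)\le_{K}0$, and combining with the opposite inequality furnished by sublinearity (Fact~\ref{fact:sublinearity}) and with pointedness of $K$ we get $f'(x_{0};-e)=-f'(x_{0};e)$, so $h\mapsto f'(x_{0};h)$ is linear. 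Feeding this back, \eqref{nierownosc_pochodna_nabla} for $e$ (times $t$) reads $f(x_{0}+h)-f(x_{0})-f'(x_{0};h)\ge_{K}-C\alpha(t)k$, while \eqref{nierownosc_pochodna_nabla} for $-e$ (times $t$), using $tf'(x_{0};-e)=-f'(x_{0};h)$ and the identity $f(x_{0}-h)-f(x_{0})=w(h)-(f(x_{0}+h)-f(x_{0}))$, reads $f(x_{0}+h)-f(x_{0})-f'(x_{0};h)\le_{K}w(h)+C\alpha(t)k$. Adding $C\alpha(t)k$ across this sandwich and using normality of $K$ (constant $\gamma$) yields
\[
\bigl\|f(x_{0}+h)-f(x_{0})-f'(x_{0};h)\bigr\|\ \le\ C\alpha(t)\|k\|+\gamma\,\bigl\|w(h)+2C\alpha(t)k\bigr\|\ =\ o(t)=o(\|h\|).
\]
Since a linear operator $T$ with $f(x_{0}+\cdot)-f(x_{0})-T(\cdot)=o(\|\cdot\|)$ is automatically bounded (continuity of $f$ at $x_{0}$ bounds $\|f(x_{0}+h)-f(x_{0})\|$ on a small ball, and evaluating $T$ on scaled unit vectors then bounds $\|T\|$), this makes $f$ Fr\'echet differentiable at $x_{0}$ with derivative $f'(x_{0};\cdot)$, which completes the argument.

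\emph{The main obstacle} is the step $\ell_{0}^{*}(w(h))=o(\|h\|)\Rightarrow\|w(h)\|=o(\|h\|)$, i.e. converting the one-dimensional Fr\'echet information about $g$ into a genuinely vectorial estimate for $f$; this is exactly the point at which the bounded-base hypothesis on $K$ is indispensable, as it supplies simultaneously the norm-dominating interior functional $\ell_{0}^{*}$ and the normality of $K$ needed to turn the paraconvex order sandwiches into norm bounds. The remaining ingredients --- continuity and strong $\alpha(\cdot)$-paraconvexity of $g$, the directional-derivative machinery of Theorem~\ref{val}, and the density and $G_{\delta}$ character of $A_{0}$ --- are either quoted from the results above or routine. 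If one instead prefers to follow the architecture of the proof of Theorem~\ref{gateaux1}, one may fix a countable dense set $\{\ell_{i}^{*}\}\subset K^{*}$ (available because $Y$, being reflexive and separable, has separable dual), intersect the Fr\'echet-differentiability sets of the functions $\ell_{i}^{*}\circ f$, and recover the linearity of $f'(x_{0};\cdot)$ on that intersection exactly as in \eqref{pochodna-kier-1}; this is where the separability of $Y$ would be used.
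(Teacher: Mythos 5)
Your argument is correct, but it is organized differently from the paper's proof. The paper first secures G\^ateaux differentiability of the vector mapping $f$ on a dense $G_\delta$ set (in effect via Theorem \ref{gateaux1}, hence via separability of $Y$ and a countable dense family in $K^*$), then picks one functional $y^*$ bounded below by $1$ on the bounded base $B$, applies the scalar Fr\'echet result (Theorem \ref{th:frechet}) to $y^*\circ f$, and upgrades G\^ateaux to Fr\'echet by writing the cone-valued difference quotient minus $f'(x_0;h)$ as $\lambda_t b_t$ with $b_t\in B$ and reading off $\lambda_t<\varepsilon$ from $y^*(b_t)\ge 1$. You instead use a single uniformly positive functional $\ell_0^*$ obtained by separating $0$ from $\overline{B}$ (which at the same time delivers pointedness and normality of $K$, facts the paper uses only implicitly), apply Theorem \ref{th:frechet} only to $\ell_0^*\circ f$, and then recover both the linearity of $f'(x_0;\cdot)$ and the uniform Fr\'echet estimate from the second-difference bound $\|w(h)\|=o(\|h\|)$ combined with the order sandwich built from \eqref{nierownosc_pochodna_nabla} and normality. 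What your route buys: it bypasses Theorem \ref{gateaux1} entirely (so neither separability of $X$ and $Y$ nor the local vector-boundedness hypothesis appearing there is needed beyond what the scalar Asplund theorem requires), and it keeps track of the $C\alpha(t)t^{-1}k$ correction that the paper silently drops when asserting that the difference quotient minus $f'(x_0;h)$ lies in $K$. What the paper's route buys: given its Theorem \ref{gateaux1} and the scalar results, the Fr\'echet upgrade is a two-line intersection of dense $G_\delta$ sets plus the base decomposition. One small point you should make explicit (at the same level of detail the paper itself glosses in Theorem \ref{gateaux1}): oddness $f'(x_0;-e)=-f'(x_0;e)$ alone is not yet linearity; additivity follows by applying sublinearity (Fact \ref{fact:sublinearity}) to $h_1=(h_1+h_2)+(-h_2)$ and invoking oddness together with pointedness of $K$, after which positive homogeneity gives full linearity.
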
 

\begin{proof}
From the fact that $K$ has a bounded base $B$ there exists a positive functional $y^*\in K^*$ such that $\inf\{y^*(b): b\in B\}\ge 1.$ From Theorem \ref{th:frechet} we have dense $G_\delta$ set $A_0$ such that $f$ is G\^ateaux differentiable on $A_0$. Since $y^*\circ f$ is strongly $\alpha(\cdot)$-paraconvex we have 
\begin{equation}
	\label{eq:eq4.1}
	\lim_{t\rightarrow 0^+}y^*\left( \frac{f(x_0+th)-f(x_0)}{t}	\right) =  (y^*\circ f)'(x_0;h)=y^*(f'(x_0;h)) 
\end{equation}
for all $x_0\in A_0, h\in X.$
From Theorem \ref{th:Rol} (i),
 for each $\varepsilon >0$ there is $\delta >0$ such that 

\begin{equation}
\label{eq:eq6}
	y^*\left( \frac{f(x_0+th)-f(x_0)}{t}- f'(x_0;h)	\right) \le \varepsilon \mbox{ for all} \|h\|\le 1, 0<t<\delta. 
\end{equation}
From \eqref{nierownosc_pochodna_nabla} we have $\frac{f(x_0+th)-f(x_0)}{t}- f'(x_0;h) \in K$ for $t>0$ and $\frac{f(x_0+th)-f(x_0)}{t}- f'(x_0;h)=\lambda_t b_t$, where $\lambda_t\ge 0$, $b_t\in B.$ Inequality \eqref{eq:eq6} is equivalent to 
$$
y^*\left(\frac{f(x_0+th)-f(x_0)}{t}- f'(x_0;h)\right)=\lambda_t y^*(b_t)<\varepsilon.
$$ 
Since $B$ is bounded and $y^*$ is bounded away from zero on $B$, $\lambda_t\le \varepsilon$ and $\|b_t\|\le M$ for some $M\ge 0$. Consequently, for each $\varepsilon>0$ there is $\delta>0$ and $V\ni 0$ such that 
$$
\frac{f(x_0+th)-f(x_0)}{t}- f'(x_0;h)\in V \mbox{ for all } \|h\|\le 1, 0<t<\delta,
$$
which completes the proof.
\end{proof}
\section{Conclusions}

From Theorem \ref{gateaux1} and Theorem \ref{th:Frechet} we get the following generalization of Rolewicz Theorem \ref{th:Rol}.
\begin{theorem}
\label{th:podsumowanie}
Let $Y$ be reflexive and separable Banach space. Let $K\subset Y$ be a closed convex and normal.
If $f: X \rightarrow Y$ is strongly $\alpha(\cdot)$-$k$-paraconvex defined on a convex set contained in a Banach space $X$, then $f$ is:
{\begin{itemize}
	\item[(i)] Fr\'echet differentiable on a dense $G_\delta$ set provided $X$ is a separable Asplund space, $K$ has a bounded base,
	\item[(ii)] G\^ateaux differentiable on a dense $G_\delta$ set provided $X$ is separable. 
\end{itemize}}
\end{theorem}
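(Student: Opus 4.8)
The plan is to obtain Theorem~\ref{th:podsumowanie} as a direct synthesis of the two principal results already proved: part~(ii) from Theorem~\ref{gateaux1} and part~(i) from Theorem~\ref{th:Frechet}. In each case the work is purely a matter of matching the hypotheses of the present statement to those of the cited theorem, so I would organize the proof as two short verifications.

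For part~(ii), assume $X$ is a separable Banach space, $Y$ is reflexive and separable, $K$ is closed, convex and normal, and $f$ is strongly $\alpha(\cdot)$-$k$-paraconvex on the given convex set. Theorem~\ref{gateaux1} then applies and yields a dense $G_\delta$ set on which $f$ is G\^ateaux differentiable --- with the one caveat that Theorem~\ref{gateaux1} additionally asks $f$ to be locally vector-bounded. I would first record that this is among the standing assumptions of Section~\ref{sec:main} (or, alternatively, deduce local vector-boundedness from the continuity built into Definition~\ref{def:para2} together with normality of $K$); granting this, (ii) is immediate.

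For part~(i), assume moreover that $X$ is a separable Asplund space and that $K$ has a bounded base. Together with reflexivity and separability of $Y$, these are precisely the hypotheses of Theorem~\ref{th:Frechet} (normality of $K$ being an extra assumption that does no harm there), so that theorem produces a dense $G_\delta$ set of Fr\'echet differentiability points, which is exactly (i). Since Fr\'echet differentiability at a point entails G\^ateaux differentiability at that point, one could also view (i) as refining (ii), but this observation is not needed for the statement.

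The only point that genuinely requires care --- the ``main obstacle'', such as it is --- is the bookkeeping around local vector-boundedness in part~(ii): one must be sure that continuity plus strong $\alpha(\cdot)$-$k$-paraconvexity with $K$ normal is enough to invoke Theorem~\ref{gateaux1} and, through it, Proposition~\ref{prop:lipschitz-Rolewicz}; otherwise local vector-boundedness must be carried as an explicit hypothesis. Beyond this, no new estimates or constructions enter: the proof is a citation of Theorems~\ref{gateaux1} and~\ref{th:Frechet}.
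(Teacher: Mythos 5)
Your proposal matches the paper's own treatment: Theorem~\ref{th:podsumowanie} is obtained there exactly as you describe, by directly invoking Theorem~\ref{gateaux1} for part~(ii) and Theorem~\ref{th:Frechet} for part~(i), with no further argument. Your remark about the local vector-boundedness hypothesis in Theorem~\ref{gateaux1} is in fact a point the paper glosses over, so flagging it (and noting it follows from continuity of $f$ together with Proposition~\ref{prop:lipschitz-Rolewicz}-type reasoning, or must be carried as an explicit assumption) is a careful and correct addition rather than a deviation.
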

%% The Appendices part is started with the command \appendix;
%% appendix sections are then done as normal sections
%% \appendix

%% \section{}
%% \label{}

%% If you have bibdatabase file and want bibtex to generate the
%% bibitems, please use
%%
%\begin{thebibliography}{00}
  \bibliographystyle{elsarticle-harv}%\biboptions{authoryear}
  \bibliography{library.bib}

\begin{thebibliography}{22}
\expandafter\ifx\csname natexlab\endcsname\relax\def\natexlab#1{#1}\fi
\expandafter\ifx\csname url\endcsname\relax
  \def\url#1{\texttt{#1}}\fi
\expandafter\ifx\csname urlprefix\endcsname\relax\def\urlprefix{URL }\fi

\bibitem[{Amahroq and Taa(1997)}]{Ama}
Amahroq, T., Taa, A., 1997. {Sufficient conditions of optimality for
  multiobjective optimization problems with gamma paraconvex data}. Studia
  Mathematica 124~(3).

\bibitem[{Bednarczuk and Le{\'{s}}niewski(2018)}]{bed-les-2017}
Bednarczuk, E.~M., Le{\'{s}}niewski, K., 2018. {On the Existence of Directional
  Derivatives for Strongly Cone-Paraconvex Mappings}. Vietnam Journal of
  Mathematics~(special issue dedicated to Michel Thera on the occasion of his
  70th birthday).

\bibitem[{Borwein(1982)}]{Borwein1982}
Borwein, J.~M., 1982. {Continuity and differentiability properties of convex
  operators}. Proceedings of the London Mathematical Society s3-44~(3),
  420--444.

\bibitem[{Borwein(1986)}]{borwein1986}
Borwein, J.~M., 1986. {Generic Differentiability of order-bounded convex
  operators}. J. Austral. Math. Soc. 28, 22--29.

\bibitem[{Cannarsa and Sinestrari(2004)}]{cannarsa_semiconcave_2004}
Cannarsa, P., Sinestrari, C., 2004. {Semiconcave functions, Hamilton-Jacobi
  equations, and optimal control}. Progress in Nonlinear Differential Equations
  and Their Applications, 58.

\bibitem[{Casini and Miglierina(2010)}]{casini_cones_2010}
Casini, E., Miglierina, E., 2010. {Cones with bounded and unbounded bases and
  reflexivity}. Nonlinear Analysis, Theory, Methods and Applications 72~(5),
  2356--2366.

\bibitem[{Daniilidis and Georgiev(2004)}]{Daniilidis2004b}
Daniilidis, A., Georgiev, P., 2004. {Approximate convexity and
  submonotonicity}. Journal of Mathematical Analysis and Applications 291~(1),
  292--301.

\bibitem[{Jahn(2014)}]{Jahn2014}
Jahn, J., 2014. {Vector Optimization Theory, Applications, and Extensions}.
  Springer Berlin.

\bibitem[{Kreyszig(2002)}]{Kreyszig2002}
Kreyszig, E., 2002. {Introductory functional analysis with applications}. John
  Wiley {\&} Sons.

\bibitem[{Mazur(1933)}]{Mazur1933}
Mazur, S., 1933. {{\"{U}}ber konvexe Mengen in linearen normierten
  R{\"{a}}umen}. Studia Mathematica 4~(1), 70--84.

\bibitem[{Rolewicz(2001)}]{Mig2005a}
Rolewicz, S., 2001. {On uniformly approximate convex and strongly
  $\alpha$({\textperiodcentered})-paraconvex functions}. Control and
  Cybernetics 30~(3).

\bibitem[{Rolewicz(2005{\natexlab{a}})}]{Rolewicz2005}
Rolewicz, S., 2005{\natexlab{a}}. {On differentiability of strongly
  $\alpha$({\textperiodcentered}) -paraconvex functions in non-separable
  Asplund spaces}. Studia Mathematica 167~(3), 235--244.

\bibitem[{Rolewicz(2005{\natexlab{b}})}]{Olech2005}
Rolewicz, S., 2005{\natexlab{b}}. {Paraconvex analysis}. Control and
  Cybernetics 34~(3).

\bibitem[{Rolewicz(2006)}]{Rolewicz2006b}
Rolewicz, S., 2006. {An extension of Mazur's theorem on Gateaux
  differentiability to the class of strongly $\alpha$-paraconvex functions}.
  Studia Mathematica 172~(3), 243--248.

\bibitem[{Rolewicz(2011)}]{Rolewicz2011}
Rolewicz, S., 2011. {Differentiability of strongly paraconvex vector-valued
  functions}. Functiones et Approximatio 2, 273--277.

\bibitem[{Rolewicz(2012{\natexlab{a}})}]{rolewicz2}
Rolewicz, S., 2012{\natexlab{a}}. {On uniformly approximate convex
  vector-valued function}. Control and Cybernetics Vol. 41~(no. 2), 443--462.

\bibitem[{Rolewicz(2012{\natexlab{b}})}]{Rolewicz2012}
Rolewicz, S., 2012{\natexlab{b}}. {On unifromly approximate convex
  vector-valued function}. Control and Cybernetics 41~(2).

\bibitem[{Tabor and Tabor(2012)}]{Tabor2012}
Tabor, J., Tabor, J., 2012. {Paraconvex, but not strongly, Takagi functions}.
  Control and Cybernetics Vol. 41, n, 545--559.

\bibitem[{Ti{\v{s}}er and Zaj{\'{i}}{\v{c}}ek(2015)}]{Tiser2015}
Ti{\v{s}}er, J., Zaj{\'{i}}{\v{c}}ek, L., 2015. {A criterion of
  $\Gamma$-nullness and differentiability of convex and quasiconvex functions}.
  Studia Mathematica 227~(2), 149--164.

\bibitem[{{Van Ngai} and Penot(2008)}]{VanNgai2008}
{Van Ngai}, H., Penot, J.~P., 2008. {Paraconvex functions and paraconvex sets}.
  Studia Mathematica 184~(1), 1--29.

\bibitem[{Zaj{\'{i}}{\v{c}}ek(2007)}]{Zajicek2007}
Zaj{\'{i}}{\v{c}}ek, L., 2007. {A C1 function which is nowhere strongly
  paraconvex and nowhere semiconcave}. Control and Cybernetics 36~(3),
  803--810.

\bibitem[{Zaj{\'{i}}{\v{c}}ek(2018)}]{Zajicek2018}
Zaj{\'{i}}{\v{c}}ek, L., 2018. {On Semiconcavity via the Second Difference}.
  Journal of Convex Analysis 25~(1), 241--269.

\end{thebibliography}
%\end{thebibliography}
%% else use the following coding to input the bibitems directly in the
%% TeX file.

%\begin{thebibliography}{00}

%% \bibitem[Author(year)]{label}
%% Text of bibliographic item

%\bibitem[ ()]{}

%\end{thebibliography}
\end{document}